\documentclass[12pt, reqno]{amsart}
\usepackage[a4paper, margin=1in]{geometry}
\usepackage{amsmath, amssymb, amsthm}
\usepackage{microtype}
\usepackage[colorlinks=true, citecolor=blue, linkcolor=blue]{hyperref}

\newtheorem{theorem}{Theorem}[section]
\newtheorem{corollary}[theorem]{Corollary}
\newtheorem{lemma}[theorem]{Lemma}
\newtheorem{proposition}[theorem]{Proposition}
\theoremstyle{definition}
\newtheorem{definition}[theorem]{Definition}
\newtheorem{remark}[theorem]{Remark}
\newtheorem{example}[theorem]{Example}

\numberwithin{equation}{section}

\usepackage{tikz}

\begin{document}
	
	\title[Betti Numbers of SCM Co-Chordal Graphs]{Betti Numbers of Sequentially Cohen-Macaulay Co-Chordal Graphs and Their Applications}
	
	\author{Mohammed Rafiq Namiq}
	\address{Department of Mathematics, College of Science, University of Sulaimani, Kurdistan Region, Iraq}
	\email{mohammed.namiq@univsul.edu.iq}
	
	\subjclass[2020]{Primary 13F55, 05E40; Secondary 13D02, 05C75}
	\keywords{Edge ideals, Betti numbers, sequentially Cohen--Macaulay, co-chordal graphs, $(d_1,\ldots,d_q)$-trees, split graphs, threshold graphs, prime ideal graphs, zero-divisor graphs.}
	
	\begin{abstract}
We study edge ideals of sequentially Cohen--Macaulay co-chordal graphs through the maximal-clique structure of their chordal complements.  After making the required construction order explicit, we derive a closed formula for the graded Betti numbers of a co-chordal graph whose complement is a $(d_1,\ldots,d_e)$-tree, and we characterize the Cohen--Macaulay case by its Betti sequence.  The general formula is then specialized to split and threshold graphs, prime ideal graphs of finite rings, nilpotent graphs of products of finite chain rings, and zero-divisor graphs of finite chain rings.  For products of chain rings, we characterize when the nilpotent graph is threshold and compute its Betti numbers in that range.  Finally, among co-chordal zero-divisor graphs of $\mathbb Z_n$, we determine exactly when the quotient is sequentially Cohen--Macaulay: this occurs for $n=p^a$, $n=2p$, and $n=2p^2$, with $p$ odd in the last two cases.
\end{abstract}
	
	\maketitle
	
	\section{Introduction}

Let $G=(V,E)$ be a finite simple graph on $N$ vertices, let $S=\mathbb K[x_v:v\in V]$, and let $I(G)$ be its edge ideal.  We write $\Delta_G=\operatorname{Ind}(G)$ for the independence complex of $G$.  Equivalently, $\Delta_G$ is the clique complex of the complement $\overline G$, and $S/I(G)$ is its Stanley--Reisner ring.  We use $\beta_{i,j}(S/I(G))$ for the graded Betti numbers and $\beta_i(S/I(G))=\sum_j\beta_{i,j}(S/I(G))$ for the total Betti numbers.

Fr\"oberg's theorem states that $I(G)$ has a $2$-linear resolution if and only if $\overline G$ is chordal \cite{froeberg1990}.  Hence, if $G$ is co-chordal, then
\[
\beta_i(S/I(G))=\beta_{i,i+1}(S/I(G))\qquad(i\ge1).
\]
The resolutions of $I(G)$ and $S/I(G)$ are related by
\[
\beta_{i,j}(I(G))=\beta_{i+1,j}(S/I(G))
\qquad(i\ge0).
\]

The present work concerns the additional sequentially Cohen--Macaulay condition within the co-chordal class.  Ahmed, Mafi, and Namiq characterized these graphs by requiring the chordal complement to be a $(d_1,\ldots,d_q)$-tree \cite{ahmed2025}.  A related formulation uses stable connectivity of the clique complex \cite{namiq2026_icm}.  An alternating formula for the linear strand of successively glued clique complexes was obtained in \cite{namiq2026_explicit}.  These results suggest that the homological invariants should be recoverable directly from a ridge-compatible construction order of the maximal cliques of $\overline G$.

Our first objective is to make this reduction explicit.  Once the maximal cliques are placed in a nonincreasing ridge-gluing order, the alternating formula simplifies to a closed binomial expression depending only on their ordered sizes.  We use this expression to determine projective dimension and depth and to characterize the Cohen--Macaulay case by its Betti sequence.  The converse in the latter characterization is proved from the Hilbert series, without assuming termwise uniqueness of a binomial expansion.

We next apply the formula to split and threshold graphs.  Split graphs were introduced by F\"oldes and Hammer \cite{foldes1977}; threshold graphs form a more restrictive subclass characterized by nested neighborhoods and several equivalent structural properties \cite{chvatal1977,hammer_ibaraki1981,mahadev1995}.  Betti numbers of split graphs have been studied by Singh and Verma \cite{singh2020}, and Fr\"oberg also attempted to work on split graphs \cite{froeberg2026}.  Our derivation comes from the $(d_1,\ldots,d_q)$-tree structure, treats zero cross-degrees uniformly, and gives consequences for projective dimension and Cohen--Macaulayness.  We also record specializations to complete split graphs, single-edge-deleted complete split graphs, and pineapple graphs.  Related formulas for broader split--join constructions appear in \cite{rather2026_splitjoin}.

Graph constructions associated with commutative rings provide a second group of applications.  We consider prime ideal graphs \cite{salih2022,rasheed2026}, the induced zero-product graph on nonzero nilpotent elements, and zero-divisor graphs in the sense of Anderson and Livingston \cite{anderson1999}, following Beck's earlier coloring construction \cite{beck1988}.  Recent work has investigated linear resolutions and Betti numbers of zero-divisor graphs \cite{rather2024_zd,dung2026,rather2026}.  We recover the complete-split structure of prime ideal graphs and derive the corresponding homological formulas, including the exceptional zero-prime case.  For finite products of Artinian chain rings, valuation vectors identify the sequentially Cohen--Macaulay co-chordal nilpotent zero-product graphs precisely with the threshold range.  Finally, among co-chordal zero-divisor graphs of $\mathbb Z_n$, we determine exactly when the quotient is sequentially Cohen--Macaulay and compute the Betti numbers in every surviving family.

The paper is organized as follows.  Section~\ref{sec:preliminaries} reviews glued clique complexes, stable connectivity, and $(d_1,\ldots,d_q)$-trees.  Section~\ref{sec:main} derives the Betti formula and its depth consequences.  Section~\ref{sec:rigidity} treats Cohen--Macaulay rigidity.  Section~\ref{sec:applications} considers split, threshold, and prime ideal graphs.  Section~\ref{sec:zero_divisors} studies nilpotent zero-product graphs of products of chain rings and zero-divisor graphs of $\mathbb Z_n$.  Section~\ref{sec:conclusion} concludes the paper.

We use the convention $\binom{a}{b}=0$ whenever $b<0$, $a<0$, or $b>a$.

\section{Glued Complexes and \texorpdfstring{$(d_1,\dots,d_q)$}{(d1,...,dq)}-Trees} \label{sec:preliminaries}
	
	The results developed in this paper are based on the combinatorial structure of glued clique complexes and their connection with sequentially Cohen--Macaulay co-chordal graphs. Throughout, let $G$ be a finite simple graph on $N$ vertices, and let $\operatorname{maxdeg}(G)$ and $\operatorname{indeg}(G)$ denote its maximum and minimum vertex degrees, respectively.
	
	\subsection{Glued Complexes and Betti Numbers}

A simplicial complex is a \emph{glued complex} if it is obtained by successively attaching simplices along simplex faces \cite{namiq2026_explicit}.  Let $\Delta_r(n_1,\ldots,n_e)$ have facets $S_1,\ldots,S_e$, where $|S_m|=n_m$, and suppose that the simplicial-complex intersection
\[
\langle S_1,\ldots,S_m\rangle\cap\langle S_{m+1}\rangle
\]
is an $(r_m-1)$-simplex for $1\le m<e$.  Here $\langle S_1,\ldots,S_m\rangle$ denotes the subcomplex generated by the listed facets.  The ordered sequences $(n_1,\ldots,n_e)$ and $(r_1,\ldots,r_{e-1})$ record the facet and intersection sizes.
	
	When $\Delta_r$ is the clique complex of the chordal complement $\overline{G}$ of a co-chordal graph $G$, the edge ideal $I(G)$ admits a $2$-linear resolution. The linear-strand Betti numbers satisfy $\beta_{i,i+1}(S/I(G))=\beta_i(S/I(G))$ and are given by \cite{namiq2026_explicit}
	\begin{equation}\label{eq:general_betti}
		\beta_i(S/I(G)) = \sum_{m=1}^{e-1}\binom{N-r_m}{i+1} - \sum_{m=1}^{e}\binom{N-n_m}{i+1}, \quad \text{for } i \ge1,
	\end{equation}
	where $N=|V(G)|$. The corresponding projective-dimension calculation in \cite{namiq2026_explicit} gives
	\[
\operatorname{pdim}(S/I(G))=N-\min_{1\le m\le e-1}r_m-1
\]
when $e\ge2$; if $e=1$, then $G$ is edgeless and the projective dimension is zero.
	
	\subsection{Stable Connectivity and \texorpdfstring{$(d_1,\ldots,d_q)$}{(d1,...,dq)}-Trees}
	
	Transitioning from arbitrary glued complexes to sequentially Cohen--Macaulay structures requires strict topological connectivity conditions. Recall that a \emph{ridge} of a facet $F$ is a face of cardinality $|F|-1$.
	
	\begin{lemma}\label{lem:ridge-obstruction}
Let $G$ be co-chordal.  If $\operatorname{Ind}(G)$ has two facets, each of which shares no ridge with any other facet, then $S/I(G)$ is not sequentially Cohen--Macaulay.
\end{lemma}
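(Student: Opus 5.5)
We will use Duval's characterization: a simplicial complex $\Delta$ is sequentially Cohen--Macaulay if and only if every pure skeleton $\Delta^{[k]}$ is Cohen--Macaulay. Here $\Delta^{[k]}$ is the subcomplex generated by all faces of cardinality $k$. We also use the standard fact that a pure Cohen--Macaulay complex is strongly connected (connected in codimension one). This follows from Reisner's criterion: links of faces of codimension at least two are connected, and an induction on dimension then gives codimension-one connectivity. The plan is to find a pure skeleton of $\Delta=\operatorname{Ind}(G)$ that fails strong connectivity. The co-chordal hypothesis is not really used, except to place us in the setting of the paper.

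Let $F_1,F_2$ be the two facets in the hypothesis, labelled so that $a=|F_1|\le b=|F_2|$. First we dispose of $a=1$. In that case the unique ridge of $F_1$ is the empty face, which is contained in every other facet, so $F_1$ would share a ridge with $F_2$. This is impossible, so $a\ge 2$. Now consider $\Gamma=\Delta^{[a]}$. Its facets are exactly the $a$-subsets of facets of $\Delta$ of size at least $a$, so $\Gamma$ is pure of dimension $a-1\ge1$. Since $F_1$ is a facet of $\Delta$, it is a facet of $\Gamma$.

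The key step is to show that $F_1$ is isolated in the facet-adjacency (dual) graph of $\Gamma$. Suppose an $a$-face $H\ne F_1$ of $\Gamma$ meets $F_1$ in a ridge $R$ with $|R|=a-1$. Then $H$ lies in some facet $G'$ of $\Delta$. Moreover $G'\ne F_1$, because $H\not\subseteq F_1$ (both have size $a$ and are distinct). Hence $R\subseteq F_1\cap G'$, so $F_1$ shares the ridge $R$ with another facet $G'$, contradicting the hypothesis.

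Next we check that $\Gamma$ has another facet at all. Since $F_2$ is a facet distinct from $F_1$, we have $F_2\not\subseteq F_1$. Choose $v\in F_2\setminus F_1$ and any $a$-subset of $F_2$ containing $v$, which is possible since $b\ge a$. This gives an $a$-face of $\Gamma$ different from $F_1$. Therefore the dual graph of $\Gamma$ is disconnected, $\Gamma$ is not strongly connected, hence not Cohen--Macaulay, and Duval's criterion shows $S/I(G)$ is not sequentially Cohen--Macaulay.

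The main point needing care is the step ``pure Cohen--Macaulay $\Rightarrow$ strongly connected'' in dimension $a-1\ge1$. We will cite it as standard (e.g.\ from Björner's or Stanley's treatment). We will also note that only $F_1$, the smaller facet, actually needs the no-shared-ridge property; $F_2$ only supplies a second face of size $a$.
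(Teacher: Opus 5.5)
Your proof is correct, and it takes a genuinely different route from the paper.

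\textbf{The paper's route.} The paper argues by contradiction through the classification \cite[Theorem~3.2]{ahmed2025}. If $S/I(G)$ were sequentially Cohen--Macaulay, then $\overline G$ would be a $(d_1,\ldots,d_e)$-tree. In its construction order, every clique after the first is glued along one of its own ridges. That ridge lies in an earlier maximal clique, which stays maximal because the sizes are nonincreasing. Hence at most one facet, the initial one, can be ridge-isolated.

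\textbf{Your route.} You use Duval's pure-skeleton criterion together with the fact that Cohen--Macaulay complexes are strongly connected. You show that $F_1$ is an isolated node of the dual graph of $\Delta^{[a]}$, where $a=|F_1|\le |F_2|$. In effect, you verify directly that $\Delta$ is not stably connected, which is condition~(4) of Theorem~\ref{thm:scm_equivalence}. You need only the elementary implication ``sequentially Cohen--Macaulay $\Rightarrow$ stably connected'', not the full tree classification.

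\textbf{Comparison.} The paper's argument is very short given machinery it already depends on. However, it genuinely needs co-chordality and the deep classification theorem, and it leaves implicit that the attaching ridge sits inside an earlier facet. Your argument needs no chordality and works for an arbitrary simplicial complex. It also proves more: a single ridge-isolated facet $F$ with $|F|\ge 2$ already obstructs sequential Cohen--Macaulayness whenever some other facet has at least $|F|$ vertices. The size condition cannot be dropped. For example, the complex with facets $\{1,2,3\}$ and $\{3,4\}$ has a ridge-isolated triangle, yet it is sequentially Cohen--Macaulay.

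Your treatment of the case $|F_1|=1$ via the empty ridge is also correct. It is consistent with the paper's conventions, in which a $K_1$ is glued along $K_0$.
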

\begin{proof}
If $S/I(G)$ were sequentially Cohen--Macaulay, then \cite[Theorem~3.2]{ahmed2025} would imply that $\overline G$ is a $(d_1,\ldots,d_e)$-tree.  In a defining construction order, every facet except the initial one is attached along one of its ridges.  Hence at most one facet can fail to share a ridge with another facet, contrary to the hypothesis.
\end{proof}
	
	\begin{definition}
		A pure simplicial complex $\Delta$ is \emph{strongly connected} if, for any two facets $F$ and $F'$, there exists a sequence of facets 
		\[
		F=F_0,F_1,\ldots,F_k=F'
		\]
		such that $F_j\cap F_{j+1}$ is a face of codimension 1 (ridge) for every $j$.
	\end{definition}
	
	\begin{definition}[\cite{namiq2026_icm}]
		A simplicial complex $\Delta$ is \emph{stably connected} if every pure skeleton $\Delta^{[i]}$, $-1\le i\le \dim\Delta$, is strongly connected.
	\end{definition}
	
	Stable connectivity is necessary for sequential Cohen--Macaulayness \cite{namiq2026_icm}. Its graph-theoretic counterpart in the present setting is the class of $(d_1,\ldots,d_q)$-trees.
	
	\begin{definition}[\cite{ahmed2025}]
		Let $d_1\ge d_2\ge\cdots\ge d_q>0$. A graph $H$ is a $(d_1,\ldots,d_q)$-tree if it is constructed inductively by $H_1=K_{d_1}$, and
		\[
		H_i
		=
		H_{i-1}\cup_{K_{d_i-1}} K_{d_i},
		\qquad
		2\le i\le q.
		\]
		The final graph $H_q$ is denoted by $H$.
	\end{definition}
	
	The following characterization will be used throughout the paper.
	
	\begin{theorem}\label{thm:scm_equivalence}
Let $G$ be co-chordal, and write $\Delta=\operatorname{Ind}(G)$.  The following conditions are equivalent.
\begin{enumerate}
\item $\Delta$ is vertex decomposable.
\item $\Delta$ is shellable.
\item $S/I(G)$ is sequentially Cohen--Macaulay.
\item $\Delta$ is stably connected.
\item $\overline G$ is a $(d_1,\ldots,d_e)$-tree.
\item The maximal cliques of $\overline G$ admit an order $F_1,\ldots,F_e$, with $|F_1|\ge\cdots\ge |F_e|$, such that
\[
F_m\cap(F_1\cup\cdots\cup F_{m-1})
\]
is a ridge of $F_m$ for every $m\ge2$.
\end{enumerate}
For such an order, if $n_m=|F_m|$ and
\[
r_{m-1}=\bigl|F_m\cap(F_1\cup\cdots\cup F_{m-1})\bigr|,
\]
then $r_{m-1}=n_m-1$ for $2\le m\le e$.
\end{theorem}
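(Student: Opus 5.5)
The plan is to take the equivalences $(1)\Leftrightarrow(2)\Leftrightarrow(3)\Leftrightarrow(5)$ essentially from the literature and to prove the remaining links, $(5)\Leftrightarrow(6)$ and $(4)$, directly from the definitions. The standard chain of implications vertex decomposable $\Rightarrow$ shellable $\Rightarrow$ sequentially Cohen--Macaulay gives $(1)\Rightarrow(2)\Rightarrow(3)$ for any simplicial complex. For co-chordal $G$, \cite[Theorem~3.2]{ahmed2025} gives $(3)\Rightarrow(5)$, and the same reference, or a direct argument, gives $(5)\Rightarrow(1)$. The direct argument for $(5)\Rightarrow(1)$ runs as follows. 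Let $v$ be a vertex lying only in the last attached clique $F_e$ but not in the shared ridge; such a vertex exists because $r_{e-1}=n_e-1$. Then $v$ is a shedding vertex of $\Delta$, or $v$ is simplicial in $\overline G$, so its link is a simplex. The deletion $\Delta\setminus v$ is again the clique complex of a $(d_1,\ldots,d_{e-1},d_e-1)$-type tree, or of one with fewer cliques. So induction on $N$ closes this implication, with the base case of a single simplex.

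For $(4)$, I would use that stable connectivity is necessary for sequential Cohen--Macaulayness \cite{namiq2026_icm}, which gives $(3)\Rightarrow(4)$. For $(4)\Rightarrow(5)$, I would look at the top pure skeleton. Let $d$ be the maximal clique size and consider the $(d-1)$-skeleton. Its facets are exactly the maximal cliques of size $d$, together with $d$-subsets of nothing larger, so only the maximal cliques of size $d$ appear. Strong connectivity of this skeleton lets us order these cliques so that each one meets an earlier one in a ridge. I would then descend through the sizes. The facets of the $(k-1)$-skeleton are the $k$-subsets of larger cliques together with the maximal cliques of size $k$, and strong connectivity forces each size-$k$ maximal clique to share a $(k-1)$-face with some $k$-face already present. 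Chordality of $\overline G$ is needed to ensure that this $(k-1)$-face is contained in a single earlier clique, which holds because a face is a clique of $\overline G$. It is also needed to ensure that the clique meets the union of the earlier cliques in \emph{exactly} that ridge.

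I expect this last step to be the main obstacle: ruling out an intersection $F_m\cap(F_1\cup\cdots\cup F_{m-1})$ that is strictly larger than a ridge, or that is not a simplex. The intersection cannot be all of $F_m$, since $F_m$ is maximal and is not contained in the union; and since it contains a ridge, it equals that ridge. The remaining difficulty is making sure the order is globally consistent. To handle this, I would choose the ordering by a perfect elimination argument on the chordal graph, adding cliques in nonincreasing size and, within each size, following a strong-connectivity path.

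The equivalence $(5)\Leftrightarrow(6)$ is a matter of translating the definitions. Given the construction $H_i=H_{i-1}\cup_{K_{d_i-1}}K_{d_i}$, the glued $K_{d_i}$ are maximal cliques, because the glued vertex is new. So $F_i=V(K_{d_i})$ meets the earlier union in the $(d_i-1)$-set, which is a ridge. Conversely, an order as in $(6)$ reconstructs the tree. The final assertion $r_{m-1}=n_m-1$ is then just the statement that the intersection is a ridge of $F_m$.
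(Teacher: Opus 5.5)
\textbf{There is a genuine gap in your direct proof of $(4)\Rightarrow(5)$.} The rest of your plan is consistent with the paper. For (1)--(5) the paper simply cites \cite[Theorem~3.2]{ahmed2025} and \cite[Corollary~5.6]{namiq2026_icm}, and it treats $(5)\Leftrightarrow(6)$ as a translation of the definition, as you do.

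The gap is at the step you flagged yourself. You argue that $F_m\cap(F_1\cup\cdots\cup F_{m-1})\neq F_m$ ``since $F_m$ is maximal and is not contained in the union.'' That does not follow: maximality only rules out containment in a \emph{single} earlier clique. This is exactly where co-chordality must be used, and without it the step is false. Take $G$ to be a perfect matching on six vertices. Then $\overline G=K_{2,2,2}$ and $\operatorname{Ind}(G)$ is the boundary of the octahedron, which is stably connected. In any ridge-following order of its eight triangles, each new triangle adds at most one of the three remaining vertices. So some triangle is attached along an edge while its third vertex is already present, and its intersection with the union is the whole triangle.

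You invoke chordality in the wrong place. The shared $(k-1)$-face lies in a single earlier clique automatically, because the neighbouring $k$-face is itself a subset of an earlier clique. The appeal to ``a perfect elimination argument'' does not supply the missing step.

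The step can be closed by induction on $m$ with a counting bound. Suppose $F_1,\dots,F_{m-1}$ already satisfy the exact ridge condition, and let $U=F_1\cup\cdots\cup F_{m-1}$.
\begin{enumerate}
\item Each of $F_2,\dots,F_{m-1}$ adds exactly one new vertex, so $|U|=n_1+m-2$.
\item $\overline G[U]$ is chordal, and its clique number is $n_1$.
\item A chordal graph on $N'$ vertices with clique number $\omega$ has at most $N'-\omega+1$ maximal cliques. To see this, choose a perfect elimination ordering ending with a maximum clique; this exists by Dirac's lemma on nonadjacent simplicial vertices. Every maximal clique is its earliest vertex together with that vertex's later neighbours, and none of the last $\omega-1$ vertices can play this role.
\item If $F_m\subseteq U$, then $F_1,\dots,F_m$ would be $m$ distinct maximal cliques of $\overline G[U]$, exceeding the bound $m-1$.
\end{enumerate}
Hence $F_m\not\subseteq U$. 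Since $F_m\cap U$ contains a ridge of $F_m$, it equals that ridge. Alternatively, you can cite \cite[Corollary~5.6]{namiq2026_icm}, as the paper does.

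A smaller point in $(5)\Rightarrow(1)$: you never verify the shedding condition. It holds because the link facet $F_e\setminus\{v\}$ is properly contained in an earlier clique, all of which have size at least $n_e$. Consequently it is not a facet of $\Delta\setminus v$, and $\Delta\setminus v$ is always the clique complex of $H_{e-1}$. The ``$(d_1,\ldots,d_{e-1},d_e-1)$'' alternative you mention never occurs.
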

\begin{proof}
The equivalence of (1)--(5) follows from \cite[Theorem~3.2]{ahmed2025} together with the stable-connectivity characterization in \cite[Corollary~5.6]{namiq2026_icm}.  Conditions (5) and (6) are equivalent by the definition of a $(d_1,\ldots,d_e)$-tree: the clique $F_m\cong K_{d_m}$ is attached to the preceding union along $K_{d_m-1}$.  The final numerical assertion is precisely the cardinality formulation of the ridge condition.
\end{proof}
	
	\section{Algebraic Simplification of the Betti Formula}\label{sec:main}
	
	The characterization of sequentially Cohen--Macaulay co-chordal graphs by $(d_1,\ldots,d_q)$-trees allows the general Betti formula for glued complexes to be simplified considerably. The resulting expression depends only on the maximal clique sizes of the chordal complement and provides a homological characterization of the sequentially Cohen--Macaulay property.
	
	\begin{theorem}\label{thm:seq_cm_betti_iff}
Let $G$ be co-chordal on $N$ vertices.  Then $S/I(G)$ is sequentially Cohen--Macaulay if and only if the maximal cliques of $\overline G$ admit a gluing order $F_1,\ldots,F_e$, with $n_m=|F_m|$ and $n_1\ge\cdots\ge n_e$, for which
\begin{equation}\label{eq:scm-betti}
\beta_i(S/I(G))
=
\sum_{m=2}^{e}\binom{N-n_m}{i}
-
\binom{N-n_1}{i+1}
\qquad(i\ge1).
\end{equation}
For $e=1$, both sums are interpreted as zero.
\end{theorem}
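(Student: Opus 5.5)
The plan is to derive \eqref{eq:scm-betti} from the glued-complex formula \eqref{eq:general_betti}, with Theorem~\ref{thm:scm_equivalence} converting sequential Cohen--Macaulayness into the ridge condition $r_{m-1}=n_m-1$ and back. The computation itself is Pascal's rule. For the converse, the key point is that the gap between the two formulas is a sum of nonnegative terms.

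\emph{Forward direction.} Assume $S/I(G)$ is sequentially Cohen--Macaulay. By Theorem~\ref{thm:scm_equivalence}(6), the maximal cliques of $\overline G$ have an order $F_1,\ldots,F_e$ with $n_1\ge\cdots\ge n_e$ such that $F_{m+1}\cap(F_1\cup\cdots\cup F_m)$ is a ridge of $F_{m+1}$.

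First I must check that this order presents $\Delta=\operatorname{Ind}(G)$ as a glued complex $\Delta_r(n_1,\ldots,n_e)$. Concretely, $\langle F_1,\ldots,F_m\rangle\cap\langle F_{m+1}\rangle$ should be the full simplex on the ridge $R=F_{m+1}\cap(F_1\cup\cdots\cup F_m)$.
\begin{itemize}
\item $R$ is a clique of the graph $H_m$ built from $F_1,\ldots,F_m$.
\item $H_m$ is obtained by gluing cliques along cliques, so its maximal cliques are exactly $F_1,\ldots,F_m$. I would verify this by induction on $m$: a clique meeting both the old part and the new vertex of $F_{m+1}$ must lie in $F_{m+1}$.
\item Hence $R$ lies in some earlier $F_j$, so the intersection is the simplex on $R$.
\end{itemize}
It follows that $r_m=n_{m+1}-1$.

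Substituting into \eqref{eq:general_betti} gives
\[
\beta_i=\sum_{m=2}^{e}\binom{N-n_m+1}{i+1}-\sum_{m=1}^{e}\binom{N-n_m}{i+1}.
\]
Pairing the $m$-th terms for $m\ge2$ and applying Pascal's rule $\binom{a+1}{i+1}-\binom{a}{i+1}=\binom{a}{i}$ leaves exactly \eqref{eq:scm-betti}. The term $-\binom{N-n_1}{i+1}$ is the unpaired remainder. The case $e=1$ is trivial.

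\emph{Converse.} Suppose the maximal cliques are ordered as a glued complex with $n_1\ge\cdots\ge n_e$ and intersection sizes $r_m$, and that \eqref{eq:scm-betti} holds. Formula \eqref{eq:general_betti} also holds for this order. Rewriting \eqref{eq:scm-betti} by Pascal's rule as above and subtracting the two expressions gives, for all $i\ge1$,
\[
\sum_{m=1}^{e-1}\left[\binom{N-r_m}{i+1}-\binom{N-n_{m+1}+1}{i+1}\right]=0 .
\]
Since $F_{m+1}$ is a maximal clique, it is not contained in the earlier union, so $r_m\le n_{m+1}-1$. Thus every bracket is nonnegative, because $\binom{x}{i+1}$ is nondecreasing in $x$, and hence every bracket vanishes.

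Now take $i=1$. The function $x\mapsto\binom{x}{2}$ is strictly increasing for $x\ge1$, and $N-n_{m+1}+1\ge1$. So a strict inequality $r_m<n_{m+1}-1$ would make the $m$-th bracket positive. Therefore $r_m=n_{m+1}-1$ for all $m$.

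This says that $F_{m+1}\cap(F_1\cup\cdots\cup F_m)$, which is the vertex set of the glued intersection, is a ridge of $F_{m+1}$. Together with $n_1\ge\cdots\ge n_e$, this is condition (6) of Theorem~\ref{thm:scm_equivalence}, so $S/I(G)$ is sequentially Cohen--Macaulay.

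\emph{Main obstacle.} I expect the delicate step to be the bookkeeping that identifies the glued-complex intersection with $F_{m+1}\cap(F_1\cup\cdots\cup F_m)$. This is needed in both directions, and in particular the forward direction must show that the ridge is a face of a single earlier facet. I would handle it through the induction on maximal cliques of $H_m$ described above. Beyond that, the argument is the monotonicity step at $i=1$.
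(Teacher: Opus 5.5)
\textbf{Comparison with the paper.} Your forward direction is the paper's argument: take the order from Theorem~\ref{thm:scm_equivalence}, so that $r_{m-1}=n_m-1$, and apply Pascal's rule. Your converse takes a different and lighter route.

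The paper subtracts the two formulas to get $\sum_{m\ge2}\binom{N-r_{m-1}}{k}=\sum_{m\ge2}\binom{N-n_m+1}{k}$ for all $k\ge2$. It then argues that these sums determine a multiset of integers $\ge2$ (peel off the largest upper index), which gives $\{N-r_{m-1}\}=\{N-n_m+1\}$. Summing yields $\sum r_{m-1}=\sum(n_m-1)$, and only at the end does it use $r_{m-1}\le n_m-1$.

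You use that inequality first, so every bracket is nonnegative. Then the single identity at $i=1$, together with strict monotonicity of $\binom{x}{2}$ for $x\ge1$, already forces $r_m=n_{m+1}-1$. This avoids the multiset lemma entirely. It also shows slightly more: for a gluing order with nonincreasing sizes, agreement of \eqref{eq:scm-betti} at $i=1$ alone, i.e.\ the edge count $\beta_1$, forces the ridge condition.

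\textbf{One step needs repair.} The paper simply takes the order from Theorem~\ref{thm:scm_equivalence} to be a gluing order. You try to verify this from condition (6), but your induction does not go through as sketched.
\begin{itemize}
\item If $H_m$ is the union of the cliques $F_1,\dots,F_m$, then ``$R$ is a clique of $H_m$'' is exactly what has to be shown.
\item If $H_m=\overline G[F_1\cup\cdots\cup F_m]$, the inductive step silently assumes that the new vertex of $F_{m+1}$ has no earlier neighbours outside $F_{m+1}$. A later clique could create such neighbours.
\end{itemize}
Your sketch never uses that $\overline G$ is chordal, and without chordality the claim is false. The triangles $\{1,2,3\},\{2,3,4\},\{3,4,5\},\{4,5,6\},\{1,6,7\}$ are all the maximal cliques of their union graph. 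Each meets the union of its predecessors in exactly two vertices. Yet $\{1,6\}$ lies in no earlier triangle; note that $1,2,4,6$ is an induced $4$-cycle.

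There are two easy fixes.
\begin{itemize}
\item Start from condition (5) instead. There the attaching $K_{d_m-1}$ is by definition a clique of the graph already built. Looking at its last-added vertex, it therefore lies in a single earlier $K_{d_j}$.
\item Alternatively, use chordality. Put $V_m=F_1\cup\cdots\cup F_m$. A clique tree of $\overline G[V_m]$ rooted at $F_1$ gives $|V_m|\ge n_1+q-1$, where $q$ is its number of maximal cliques. Since $|V_m|=n_1+m-1$, these maximal cliques are exactly $F_1,\dots,F_m$, so $R$ lies in one of them.
\end{itemize}
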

\begin{proof}
Fix a gluing order and put
\[
r_{m-1}=\bigl|F_m\cap(F_1\cup\cdots\cup F_{m-1})\bigr|.
\]
The general glued-complex formula \eqref{eq:general_betti} gives
\begin{equation}\label{eq:general-ordered}
\beta_i(S/I(G))
=
\sum_{m=2}^{e}\binom{N-r_{m-1}}{i+1}
-
\sum_{m=1}^{e}\binom{N-n_m}{i+1}.
\end{equation}
If $S/I(G)$ is sequentially Cohen--Macaulay, choose the order supplied by Theorem~\ref{thm:scm_equivalence}.  Then $r_{m-1}=n_m-1$, and Pascal's identity transforms \eqref{eq:general-ordered} into \eqref{eq:scm-betti}.

Conversely, suppose a gluing order satisfies \eqref{eq:scm-betti}.  Comparing it with \eqref{eq:general-ordered} yields
\[
\sum_{m=2}^{e}\binom{N-r_{m-1}}{k}
=
\sum_{m=2}^{e}\binom{N-n_m+1}{k}
\qquad(k\ge2).
\]
A finite multiset of integers at least $2$ is determined by all sums $\sum_j\binom{a_j}{k}$ for $k\ge2$: compare the largest upper index and its multiplicity, remove those terms, and continue by downward induction.  Hence
\[
\{N-r_{m-1}:2\le m\le e\}
=
\{N-n_m+1:2\le m\le e\}
\]
as multisets.  Equality of their sums gives
\[
\sum_{m=2}^{e}r_{m-1}=\sum_{m=2}^{e}(n_m-1).
\]
Because each $F_m$ is maximal, $r_{m-1}\le n_m-1$.  All these inequalities must therefore be equalities.  Theorem~\ref{thm:scm_equivalence} now implies that $S/I(G)$ is sequentially Cohen--Macaulay.  The case $e=1$ is the edgeless graph and is immediate.
\end{proof}
	
	Theorem~\ref{thm:seq_cm_betti_iff} reduces the general alternating Betti formula to an expression depending only on the maximal clique sizes of the chordal complement. In particular,  the projective dimension is determined by the smallest maximal clique. The following result recovers \cite[Theorem 5.4]{ahmed2025} and is also contained in \cite[Corollary 5.11]{namiq2026_icm}.
	
	\begin{corollary}\label{cor:pdim_max_degree}
Let $G$ be a sequentially Cohen--Macaulay co-chordal graph on $N$ vertices, and let $n_{\min}$ be the smallest size of a maximal clique of $\overline G$.  Then
\[
\operatorname{pdim}(S/I(G))=N-n_{\min}=\operatorname{maxdeg}(G)
\]
and
\[
\operatorname{depth}(S/I(G))=n_{\min}=N-\operatorname{maxdeg}(G).
\]
\end{corollary}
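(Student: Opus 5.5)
Fix the order of maximal cliques $F_1,\ldots,F_e$ from Theorem~\ref{thm:scm_equivalence}(6), so that $n_1\ge\cdots\ge n_e$ and $r_{m-1}=n_m-1$ for $m\ge2$. The edgeless case $e=1$ is separate: then $n_{\min}=N$, $\operatorname{maxdeg}(G)=0$, the projective dimension is $0$ and the depth is $N$, so every claimed equality holds. For $e\ge2$ there are two routes to the projective dimension, and I would give the first with the second as a check.

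\emph{First route.} Apply the projective-dimension formula from \cite{namiq2026_explicit}, $\operatorname{pdim}=N-\min_m r_m-1$. Since $r_{m-1}=n_m-1$ and $n_m\ge n_{\min}$ for $m\ge 2$, the minimum of the $r$'s is $n_{\min}-1$, and this gives $N-n_{\min}$. The one subtle point is when $n_{\min}$ is attained only by $F_1$. This cannot happen unless all the $n_m$ are equal, because the order is nonincreasing; in that case $n_e=n_1=n_{\min}$ and the minimum is still attained for some $m\ge2$.

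\emph{Second route.} Read the answer off \eqref{eq:scm-betti}. Take $i=N-n_{\min}=N-n_e$. Then $\binom{N-n_e}{i}=1$, each term $\binom{N-n_m}{i}$ is nonnegative, and $\binom{N-n_1}{i+1}=0$ because $N-n_1\le i$. Hence $\beta_i\ge1$. For $i>N-n_e$, every term vanishes. So $\operatorname{pdim}(S/I(G))=N-n_{\min}$.

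Auslander--Buchsbaum then gives $\operatorname{depth}=N-\operatorname{pdim}=n_{\min}$.

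The remaining step, which I expect to be the main obstacle, is identifying $N-n_{\min}$ with $\operatorname{maxdeg}(G)$. For a vertex $v$, its degree is $\deg_G(v)=N-1-\deg_{\overline G}(v)=N-|N_{\overline G}[v]|$. So it suffices to show that $\min_v|N_{\overline G}[v]|=n_{\min}$.

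\emph{Upper bound on the minimum.} I would use the last clique $F_e$. It is attached along a ridge, so it contains a vertex $v$ not lying in $F_1\cup\cdots\cup F_{e-1}$. Such a $v$ belongs only to $F_e$, so its closed neighborhood is $F_e$ and has size $n_e=n_{\min}$. Any simplicial vertex of a clique of minimal size would also work.

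\emph{Lower bound on the minimum.} Every vertex lies in some maximal clique, and its closed neighborhood contains that clique. Hence $|N_{\overline G}[v]|\ge n_{\min}$ for all $v$.

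Together these give $\operatorname{maxdeg}(G)=N-n_{\min}$, and therefore $\operatorname{depth}=N-\operatorname{maxdeg}(G)$.
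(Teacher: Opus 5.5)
Your proposal is correct and follows the paper's proof: your second route is the paper's argument (the summand $\binom{N-n_{\min}}{N-n_{\min}}=1$ survives, $\binom{N-n_1}{N-n_{\min}+1}=0$, and every term vanishes beyond that index), and the paper also identifies $\operatorname{maxdeg}(G)$ through the new vertex of $F_e$, which has $\overline G$-degree $n_{\min}-1$, before finishing with Auslander--Buchsbaum. Your first route, via the cited formula $\operatorname{pdim}=N-\min r_m-1$, is a valid independent check, and your lower bound $|N_{\overline G}[v]|\ge n_{\min}$ makes explicit a step the paper only asserts.
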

\begin{proof}
If $\overline G$ has one maximal clique, then $G$ is edgeless, so the projective dimension and the maximum degree are zero, while $n_{\min}=N$ and $\operatorname{depth}(S/I(G))=N$.

Assume that $\overline G$ has at least two maximal cliques and choose the ridge-gluing order supplied by Theorem~\ref{thm:scm_equivalence}.  Since the clique sizes are nonincreasing, a clique of size $n_{\min}$ occurs among $F_2,\ldots,F_e$.  Formula~\eqref{eq:scm-betti} shows that the largest homological index at which a positive summand can occur is $N-n_{\min}$.  At this index, the negative term vanishes, because
\[
\binom{N-n_1}{N-n_{\min}+1}=0,
\]
whereas a summand corresponding to a smallest maximal clique equals $1$.  Hence
\[
\operatorname{pdim}(S/I(G))=N-n_{\min}.
\]

The new vertex introduced by the final clique $F_e$, whose size is $n_{\min}$, has degree $n_{\min}-1$ in $\overline G$, and every vertex of $\overline G$ has degree at least this value.  Therefore
\[
\operatorname{indeg}(\overline G)=n_{\min}-1,
\]
so complementation gives
\[
\operatorname{maxdeg}(G)
=N-1-\operatorname{indeg}(\overline G)
=N-n_{\min}.
\]
Finally, the Auslander--Buchsbaum formula yields
\[
\operatorname{depth}(S/I(G))
=N-\operatorname{pdim}(S/I(G))
=n_{\min}
=N-\operatorname{maxdeg}(G).
\]
\end{proof}

\section{Connectivity and Cohen--Macaulay Homological Rigidity} \label{sec:rigidity}
	
	In the Cohen--Macaulay case, purity and codimension-one connectivity rigidify the clique construction.  The following characterization is a specialization of Theorem~\ref{thm:scm_equivalence}; compare \cite[Corollary~3.4]{ahmed2025} and \cite[Corollary~5.5]{namiq2026_icm}.
	
	\begin{theorem}\label{thm:cm_characterization}
Let $G$ be co-chordal.  The following are equivalent.
\begin{enumerate}
\item $S/I(G)$ is Cohen--Macaulay.
\item $\operatorname{Ind}(G)$ is pure and strongly connected in codimension one.
\item $\overline G$ is a pure $d$-tree.
\item The maximal cliques admit a construction order in which $n_1=\cdots=n_e=d$ and $r_1=\cdots=r_{e-1}=d-1$.
\end{enumerate}
\end{theorem}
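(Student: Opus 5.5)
The plan is to reduce the statement to Theorem~\ref{thm:scm_equivalence} by adding purity to each of its conditions. The basic fact is that Cohen--Macaulay is the same as sequentially Cohen--Macaulay together with purity of $\Delta=\operatorname{Ind}(G)$. Indeed, a Cohen--Macaulay Stanley--Reisner ring is unmixed, so $\Delta$ is pure. Conversely, for a pure complex the sequentially Cohen--Macaulay filtration has a single nonzero step, so sequentially Cohen--Macaulay and pure implies Cohen--Macaulay. The facets of $\Delta$ are exactly the maximal cliques of $\overline G$, so purity of $\Delta$ means that all maximal cliques of $\overline G$ have a common size $d$. With this fact in hand, I will prove (1)$\Leftrightarrow$(4), (3)$\Leftrightarrow$(4), and (2)$\Leftrightarrow$(1).

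For (1)$\Rightarrow$(4): $S/I(G)$ is sequentially Cohen--Macaulay, so Theorem~\ref{thm:scm_equivalence}(6) gives a ridge-gluing order of the maximal cliques. Purity forces $n_1=\cdots=n_e=d$, and the numerical assertion of Theorem~\ref{thm:scm_equivalence} gives $r_{m-1}=n_m-1=d-1$. For (4)$\Rightarrow$(1): the order in (4) satisfies condition (6) of Theorem~\ref{thm:scm_equivalence}, since $r_{m-1}=d-1=n_m-1$ says the intersection is a ridge of $F_m$. Hence $S/I(G)$ is sequentially Cohen--Macaulay, and $\Delta$ is pure, so $S/I(G)$ is Cohen--Macaulay. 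The equivalence (3)$\Leftrightarrow$(4) is the definition of a $(d,\ldots,d)$-tree, read exactly as in the proof of (5)$\Leftrightarrow$(6) in Theorem~\ref{thm:scm_equivalence}.

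For (2), I will note that for pure $\Delta$ the top pure skeleton is $\Delta$ itself. Stable connectivity of a pure complex therefore contains strong connectivity in codimension one. For (1)$\Rightarrow$(2), Theorem~\ref{thm:scm_equivalence}(4) gives stable connectivity, and purity is already established, so (2) follows. A more direct route is to use the order from (4): each $F_m$ meets an earlier facet in a $(d-1)$-set, so by induction every facet is linked to $F_1$ by a chain of ridges.

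The step I expect to be the main obstacle is (2)$\Rightarrow$(1) or (2)$\Rightarrow$(4), because strong connectivity only of the top skeleton does not literally assert stable connectivity of all lower skeleta. I would first try to prove (2)$\Rightarrow$(4) directly using chordality of $\overline G$. Order the facets by a walk through the ridge-adjacency graph, so that each $F_m$ shares a ridge with some earlier $F_j$. The difficulty is to show that $F_m\cap(F_1\cup\cdots\cup F_{m-1})$ is exactly that ridge and contains no extra vertices. For this I would take a perfect-elimination-style argument: in a chordal graph one can choose a leaf of a clique tree and peel it off, and pure strong connectivity lets the clique tree be chosen with all edge separators of size $d-1$. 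If this fails, I would fall back on citing \cite[Corollary~3.4]{ahmed2025} and \cite[Corollary~5.5]{namiq2026_icm}, which the paper indicates contain this specialization.
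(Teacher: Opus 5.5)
Your arguments for (1)$\Rightarrow$(2), (1)$\Leftrightarrow$(4) and (3)$\Leftrightarrow$(4) are correct and essentially match the paper's: purity combined with Theorem~\ref{thm:scm_equivalence}.

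\textbf{The gap is (2)$\Rightarrow$(1), which you do not prove.} Your clique-tree sketch rests on the claim that pure strong connectivity lets one choose a clique tree whose separators all have size $d-1$. You only assert this, and falling back on the literature is not a proof.

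Moreover, the obstacle that sent you looking for a detour does not exist for pure complexes. Strong connectivity of $\Delta$ in codimension one \emph{implies} strong connectivity of every pure skeleton $\Delta^{[i]}$. Let $\Delta$ be pure of dimension $d-1$. For $i=d-1$ there is nothing to prove, since $\Delta^{[d-1]}=\Delta$; so take $i\le d-2$.
\begin{itemize}
\item Every $i$-face lies in a facet.
\item Two $(i+1)$-subsets of one facet are joined by exchanging one vertex at a time.
\item For consecutive facets $F,F'$ of a ridge chain, $|F\cap F'|=d-1\ge i+1$, so one passes from $F$ to $F'$ through an $i$-face inside $F\cap F'$.
\end{itemize}
Thus $\Delta$ is stably connected. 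Theorem~\ref{thm:scm_equivalence} then gives sequential Cohen--Macaulayness, and purity upgrades this to Cohen--Macaulayness. This is exactly how the paper closes the cycle.

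Your route can also be completed, but only by importing a genuine theorem on chordal graphs. The clique trees of a chordal graph are precisely the maximum-weight spanning trees of its clique-intersection graph, with edge weights $|F\cap F'|$.
\begin{itemize}
\item Distinct $d$-cliques meet in at most $d-1$ vertices, so any spanning tree has weight at most $(e-1)(d-1)$.
\item A spanning tree of ridge adjacencies exists by strong connectivity and attains this bound, so it is a clique tree.
\item List the cliques so that each parent precedes its children. The running-intersection property then gives $F_m\cap(F_1\cup\cdots\cup F_{m-1})=F_m\cap F_{\mathrm{par}(m)}$, which is a ridge.
\end{itemize}
This is condition (4). It yields (2)$\Rightarrow$(4) directly from chordality and bypasses the stable-connectivity characterization. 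However, it is heavier than the paper's elementary skeleton argument.
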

\begin{proof}
If $S/I(G)$ is Cohen--Macaulay, then $\Delta=\operatorname{Ind}(G)$ is pure and Cohen--Macaulay; in particular, it is strongly connected in codimension one.  It is also sequentially Cohen--Macaulay, so Theorem~\ref{thm:scm_equivalence} gives a $(d_1,\ldots,d_e)$-tree construction.  Purity forces $d_1=\cdots=d_e=d$, and hence $\overline G$ is a pure $d$-tree.

Conversely, assume that $\Delta$ is pure and strongly connected in codimension one.  Every pure skeleton of $\Delta$ is strongly connected: two faces of the same dimension may first be connected inside their containing facets, and consecutive facets in a ridge chain may be crossed through faces contained in their common ridge.  Thus $\Delta$ is stably connected.  Theorem~\ref{thm:scm_equivalence} implies that $S/I(G)$ is sequentially Cohen--Macaulay.  Since $\Delta$ is pure, sequential Cohen--Macaulayness is equivalent to Cohen--Macaulayness.  This proves the equivalence of (1) and (2), while Theorem~\ref{thm:scm_equivalence} and purity give the equivalence with (3) and (4).
\end{proof}
	
	\begin{theorem}\label{thm:cm_betti_iff}
Let $G$ be co-chordal, and suppose that $\overline G$ has $e$ maximal cliques.  Then $S/I(G)$ is Cohen--Macaulay if and only if
\[
\beta_i(S/I(G))=i\binom{e}{i+1}
\qquad(i\ge1).
\]
\end{theorem}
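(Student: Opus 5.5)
We prove the two directions separately. The forward direction is a binomial identity applied to Theorem~\ref{thm:seq_cm_betti_iff}. The converse uses the Hilbert series together with the Auslander--Buchsbaum formula, so it never needs to identify the clique sizes termwise.

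\emph{Forward direction.} Assume $S/I(G)$ is Cohen--Macaulay. By Theorem~\ref{thm:cm_characterization}, $\overline G$ is a pure $d$-tree with $n_1=\cdots=n_e=d$. Each clique after the first adds exactly one new vertex, so $N=d+e-1$. Formula~\eqref{eq:scm-betti} then reads
\[
\beta_i(S/I(G))=(e-1)\binom{e-1}{i}-\binom{e-1}{i+1}.
\]
We compare this with $i\binom{e}{i+1}$. Pascal's rule gives
\[
i\binom{e}{i+1}=i\binom{e-1}{i}+i\binom{e-1}{i+1}.
\]
The absorption identity $(i+1)\binom{e-1}{i+1}=(e-1-i)\binom{e-1}{i}$ rewrites the last term as
\[
i\binom{e-1}{i+1}=(e-1-i)\binom{e-1}{i}-\binom{e-1}{i+1}.
\]
Adding the two displays yields exactly $(e-1)\binom{e-1}{i}-\binom{e-1}{i+1}$. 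The case $e=1$ is the edgeless graph, where both sides vanish.

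\emph{Converse.} Assume $\beta_i(S/I(G))=i\binom{e}{i+1}$ for all $i\ge1$. If $e=1$, then $G$ is edgeless and $S/I(G)$ is trivially Cohen--Macaulay, so let $e\ge2$.

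\begin{enumerate}
\item By Fr\"oberg's theorem the resolution is $2$-linear, so $\beta_i=\beta_{i,i+1}$. The largest $i$ with $\beta_i\neq0$ is $i=e-1$, hence $\operatorname{pdim}(S/I(G))=e-1$.
\item Linearity also determines the Hilbert series as $H(t)=Q(t)/(1-t)^N$, where
\[
Q(t)=1+\sum_{i\ge1}(-1)^i\,i\binom{e}{i+1}t^{i+1}=1-\sum_{j\ge2}(-1)^{j}(j-1)\binom{e}{j}t^{j}.
\]
\item Next compute $Q(t)$ in closed form. Write $(j-1)\binom ej=j\binom ej-\binom ej$. The $j=0,1$ terms of $\sum_{j\ge0}(-1)^j(j-1)\binom ej t^j$ sum to exactly $-1$, so
\[
Q(t)=-\sum_{j\ge0}(-1)^j(j-1)\binom ej t^j .
\]
Now use $\sum_j(-1)^j\binom ej t^j=(1-t)^e$ and $\sum_j(-1)^j j\binom ej t^j=-et(1-t)^{e-1}$, the latter obtained by applying $t\,\frac{d}{dt}$. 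These give
\[
Q(t)=et(1-t)^{e-1}+(1-t)^e=(1-t)^{e-1}\bigl(1+(e-1)t\bigr).
\]
\item Since $1+(e-1)t$ equals $e\neq0$ at $t=1$, the pole order of $H(t)$ at $t=1$ is $N-(e-1)$. Hence $\dim S/I(G)=N-e+1$.
\item The Auslander--Buchsbaum formula gives $\operatorname{depth}S/I(G)=N-(e-1)=\dim S/I(G)$, so $S/I(G)$ is Cohen--Macaulay.
\end{enumerate}

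The only delicate step is the closed-form evaluation of $Q(t)$ in step~3. The bookkeeping of the $j=0$ and $j=1$ terms must come out to exactly $-1$, and this should be checked against a small case such as $e=2$, where $Q(t)=1-t^2=(1-t)(1+t)$. Note that the argument never uses the hypothesis that $e$ is the number of maximal cliques; the Betti sequence alone forces Cohen--Macaulayness. This is consistent with the statement.
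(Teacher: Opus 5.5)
Your proposal is correct and takes the same route as the paper. The forward direction specializes formula~\eqref{eq:scm-betti} to a pure $d$-tree with $N-d=e-1$. The converse computes the Hilbert numerator $(1+(e-1)t)(1-t)^{e-1}$ to obtain $\dim S/I(G)=N-e+1$ and matches it with the depth given by Auslander--Buchsbaum from $\beta_{e-1}=e-1\neq 0$. Your explicit checks of the binomial identity and of the closed form of the numerator fill in steps that the paper only asserts.
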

\begin{proof}
If $S/I(G)$ is Cohen--Macaulay, then $\overline G$ is a pure $d$-tree.  Each clique after the first introduces one vertex, so $N-d=e-1$.  Formula \eqref{eq:scm-betti} gives
\[
\beta_i=(e-1)\binom{e-1}{i}-\binom{e-1}{i+1}
=i\binom{e}{i+1}.
\]

Conversely, assume the displayed Betti formula.  The resolution is $2$-linear, and its Hilbert numerator is
\[
1+\sum_{i=1}^{e-1}(-1)^i i\binom{e}{i+1}t^{i+1}
=(1+(e-1)t)(1-t)^{e-1}.
\]
Consequently,
\[
\dim S/I(G)=N-e+1.
\]
The last nonzero Betti number is $\beta_{e-1}=e-1$ when $e\ge2$, so the Auslander--Buchsbaum formula gives
\[
\operatorname{depth}S/I(G)=N-(e-1)=N-e+1.
\]
Thus depth equals dimension, and $S/I(G)$ is Cohen--Macaulay.  If $e=1$, then $G$ is edgeless and the assertion is immediate.
\end{proof}
	
	A \emph{co-tree} is a graph whose complement is a tree. The next result shows that its homology depends only on its size.
	
	\begin{corollary}
		Let $G$ be a co-tree on $N\ge 3$ vertices. Then $S/I(G)$ is Cohen--Macaulay and
		\[
		\beta_i(S/I(G)) = i\binom{N-1}{i+1}, \quad \text{for } i \ge1.
		\]
	\end{corollary}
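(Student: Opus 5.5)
The plan is to recognize $\overline G$, a tree on $N\ge3$ vertices, as a pure $2$-tree in the sense of the $(d_1,\ldots,d_q)$-tree definition with $d_1=\cdots=d_q=2$. Then Theorem~\ref{thm:cm_characterization} gives Cohen--Macaulayness, and Theorem~\ref{thm:cm_betti_iff} gives the Betti numbers. First, $G$ is co-chordal: a tree has no cycles at all, hence no induced cycles of length at least four, so Fr\"oberg's theorem and the framework of Section~\ref{sec:rigidity} apply.

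Second, I would describe the maximal cliques of $\overline G$. Since $N\ge3$ and a tree is connected, $\overline G$ has no isolated vertices. Because it is triangle-free, its maximal cliques are exactly its edges, so $e=N-1$ and every maximal clique has size $d=2$. This is the only place where $N\ge3$ matters. For $N=2$ the tree is a single edge, and the formula still happens to hold, but for $N=1$ the only maximal clique is a vertex. The hypothesis conveniently excludes these degenerate cases.

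Third, I would produce the construction order required by condition (4) of Theorem~\ref{thm:cm_characterization}. Root the tree at a leaf and order its edges by a breadth-first (or any leaf-adding) traversal. Each new edge $F_m$ then meets $F_1\cup\cdots\cup F_{m-1}$ in exactly one vertex, namely its endpoint already reached. The other endpoint is new, because otherwise the tree would contain a cycle, and the two sets meet nontrivially because every initial segment of the order is connected. Thus $n_m=2$ and $r_{m-1}=1=d-1$, so $\overline G$ is a pure $2$-tree and $S/I(G)$ is Cohen--Macaulay.

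Finally, I would apply Theorem~\ref{thm:cm_betti_iff} with $e=N-1$, which gives $\beta_i(S/I(G))=i\binom{N-1}{i+1}$. As a sanity check, the same value follows from formula~\eqref{eq:scm-betti}: there $N-n_m=N-2$, and $(N-2)\binom{N-2}{i}-\binom{N-2}{i+1}=i\binom{N-1}{i+1}$. There is no real obstacle. The only point needing care is verifying that the intersections in the chosen order are exactly single vertices, and this follows from connectivity plus acyclicity of the tree.
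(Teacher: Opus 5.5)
Your proposal is correct and follows the same route as the paper. You identify the maximal cliques of the tree $\overline G$ as its $N-1$ edges, view $\overline G$ as a pure $(2,\ldots,2)$-tree, and apply Theorem~\ref{thm:cm_betti_iff} with $e=N-1$, and you additionally make explicit the leaf-adding construction order that the paper leaves implicit (as a minor aside, $N\ge2$ already suffices for $\overline G$ to have no isolated vertices).
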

	\begin{proof}
		A tree on $N$ vertices has $N-1$ edges, each corresponding to a maximal clique of size $2$. Hence $\overline{G}$ forms a pure $(2,\dots,2)$-tree, so Theorem~\ref{thm:cm_betti_iff} applies with $e=N-1$, giving the stated formula.
	\end{proof}
	
	\section{Applications to Classical Graph Families} \label{sec:applications}
	
	The algebraic framework from Sections \ref{sec:main} and \ref{sec:rigidity} yields exact homological invariants for several standard graph classes by expressing their chordal complements as $(d_1,\dots,d_q)$-trees.
	
	\subsection{Split Graphs}
	
	A graph $G$ is a \emph{split graph} if its vertex set admits a partition
	\[
	V(G)=C \sqcup U,
	\]
	where $C$ induces a complete subgraph and $U$ is an independent set.
	
	The defining structure of a split graph immediately implies that both the graph and its complement are chordal. Thus, the classical characterization of F\"oldes and Hammer \cite{foldes1977} is recovered as a consequence of the following result.
	
	\begin{theorem}\label{thm:split_is_scm}
Every split graph is co-chordal and sequentially Cohen--Macaulay.
\end{theorem}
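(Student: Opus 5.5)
The plan is to show directly that $\overline G$ is a $(d_1,\ldots,d_q)$-tree and then invoke Theorem~\ref{thm:scm_equivalence}, which converts this into both co-chordality and sequential Cohen--Macaulayness.

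Write $V(G)=C\sqcup U$ with $C$ a clique and $U$ independent in $G$. In $\overline G$ the roles swap: $U$ is a clique, $C$ is independent, and a vertex $c\in C$ is adjacent in $\overline G$ exactly to $N_U(c):=U\setminus N_G(c)$. Hence $\overline G$ is again split and in particular chordal, which gives co-chordality of $G$. To describe the maximal cliques of $\overline G$, note that a clique meets $C$ in at most one vertex. So the maximal cliques are among $U$ itself and the sets $\{c\}\cup N_U(c)$ for $c\in C$; the latter is always maximal (any vertex of $U$ joinable to it already lies in $N_U(c)$), and $U$ is maximal unless some $c$ satisfies $N_U(c)=U$.

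The key step is the construction order. If $U$ is a maximal clique (no $c$ has $N_U(c)=U$), set $F_1=U$. Then list the sets $F_c=\{c\}\cup N_U(c)$ in nonincreasing order of $|N_U(c)|$. Each $F_c$ meets the preceding union exactly in $N_U(c)$, because distinct vertices of $C$ are nonadjacent in $\overline G$ and $N_U(c)\subseteq U\subseteq F_1$. This intersection is a ridge of $F_c$. Since $|F_c|=|N_U(c)|+1\le |U|$, the sizes are nonincreasing. If instead some $c_0$ has $N_U(c_0)=U$, set $F_1=\{c_0\}\cup U$ and continue in the same way with the remaining $c$. For these, $|F_c|\le |U|+1=|F_1|$, and each still meets the preceding union in $N_U(c)\subseteq U$, again a ridge.

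The main obstacle is handling vertices $c$ with $N_U(c)=\emptyset$, i.e.\ isolated vertices of $\overline G$, giving singleton cliques. Their intersection with the union is the empty face, which is a ridge of a $0$-simplex, so the ridge condition in Theorem~\ref{thm:scm_equivalence}(6) is still satisfied, with $d_m=1$ allowed by $d_q>0$. The other subtle point is that $F_1$ must have maximum size; the case split above ensures this. Finally, the degenerate cases $U=\emptyset$ or $C=\emptyset$ need a quick check. If $U=\emptyset$, then $\overline G$ is edgeless and all cliques are singletons glued along $\emptyset$. If $C=\emptyset$, then $\overline G$ is complete and $e=1$. With condition (6) verified in every case, Theorem~\ref{thm:scm_equivalence} yields the claim.
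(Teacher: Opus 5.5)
Your proposal is correct and follows essentially the same route as the paper. It uses the same description of the maximal cliques $\{c\}\cup(U\setminus N_G(c))$ of $\overline G$, the same case split on whether some clique vertex has zero cross-degree (so that $U$ is or is not a maximal clique), and the same nonincreasing ridge-gluing order to verify condition~(6) of Theorem~\ref{thm:scm_equivalence}; your explicit treatment of singleton cliques glued along the empty face, which the paper uses without comment, is also correct.
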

\begin{proof}
Let $V(G)=C\sqcup U$, where $C=\{x_1,\ldots,x_n\}$ is a clique and $U$ is independent.  In $\overline G$, the set $U$ is a clique and $C$ is independent.  For each $k$, put
\[
F_k=\{x_k\}\cup\bigl(U\setminus N_G(x_k)\bigr).
\]
These are the maximal cliques containing vertices of $C$.  Order them by nondecreasing values of $|N_G(x_k)\cap U|$, so that their cardinalities are nonincreasing.  If every $N_G(x_k)\cap U$ is nonempty, then $U$ is also maximal; start with $U$ and attach the ordered cliques $F_k$ along the ridges $U\setminus N_G(x_k)$.

If $N_G(x_j)\cap U=\varnothing$ for some $j$, then $U$ is not maximal.  Start with one clique $F_j=U\cup\{x_j\}$ having zero cross-degree, place any remaining zero-cross-degree cliques next, and then append the other $F_k$ in the order just specified.  Every appended $F_k$ meets the preceding union in $U\setminus N_G(x_k)$, which is a ridge of $F_k$.  In either case the maximal cliques admit the order in Theorem~\ref{thm:scm_equivalence}; hence $\overline G$ is chordal and $S/I(G)$ is sequentially Cohen--Macaulay.
\end{proof}
	
	\begin{theorem}\label{thm:split_betti}
Let $G$ be a split graph with $V(G)=C\sqcup U$, where $C=\{x_1,\ldots,x_n\}$ and $n\ge1$.  Put
\[
n_k=|N_G(x_k)\cap U|.
\]
Then
\[
\beta_i(S/I(G))
=
\sum_{k=1}^{n}\binom{n+n_k-1}{i}
-
\binom{n}{i+1}
\qquad(i\ge1).
\]
\end{theorem}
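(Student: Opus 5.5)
The plan is to read off the maximal cliques of $\overline G$ and their sizes from the proof of Theorem~\ref{thm:split_is_scm}, and then substitute them into formula \eqref{eq:scm-betti} of Theorem~\ref{thm:seq_cm_betti_iff}. That theorem applies because $G$ is co-chordal and sequentially Cohen--Macaulay by Theorem~\ref{thm:split_is_scm}. Write $u=|U|$, so $N=n+u$.

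First I will check that the maximal cliques of $\overline G$ are exactly the $n$ sets $F_k=\{x_k\}\cup(U\setminus N_G(x_k))$, together with $U$ itself when $U$ is maximal.
\begin{itemize}
\item Since $C$ is independent in $\overline G$, a clique of $\overline G$ contains at most one vertex of $C$.
\item A clique containing $x_k$ therefore lies in $F_k$.
\item The sets $F_k$ are pairwise distinct, because $x_k\in F_k$.
\end{itemize}
This gives $|F_k|=1+u-n_k$, hence $N-|F_k|=n+n_k-1$. If $U$ is maximal, then $N-|U|=n$.

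The computation then splits into the two cases of the proof of Theorem~\ref{thm:split_is_scm}.

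\emph{Case 1: every $n_k\ge1$.} Here $U$ is a maximal clique of size $u\ge 1+u-n_k$, so it is a largest clique and serves as $F_1$ in the nonincreasing ridge-gluing order. The remaining cliques are $F_1,\dots,F_n$. Formula \eqref{eq:scm-betti} then reads
\[
\sum_{k=1}^n\binom{n+n_k-1}{i}-\binom{n}{i+1},
\]
which is exactly the claimed expression.

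\emph{Case 2: some $n_j=0$.} Now $U$ is not maximal, and the initial clique is $F_j=U\cup\{x_j\}$, of size $u+1$, so that $N-n_1=n-1$. Formula \eqref{eq:scm-betti} gives
\[
\sum_{k\ne j}\binom{n+n_k-1}{i}-\binom{n-1}{i+1}.
\]
The missing $k=j$ summand is $\binom{n-1}{i}$. By Pascal's identity,
\[
\binom{n-1}{i}-\binom{n}{i+1}=-\binom{n-1}{i+1},
\]
so the two expressions agree and the formula holds in this case as well.

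The main point needing care is the degenerate boundary. When $U=\varnothing$, every $n_k=0$ and $G=K_n$. This falls under Case 2 with $F_j=\{x_j\}$, and I will verify it against the known value $i\binom{n}{i+1}$, using $n\binom{n-1}{i}=(i+1)\binom{n}{i+1}$. When $n=1$, $\overline G$ may consist of a single clique, so $e=1$; both sides then vanish for $i\ge1$ by the binomial convention and the empty-sum convention of Theorem~\ref{thm:seq_cm_betti_iff}. Finally, I will confirm that the order of the remaining $F_k$, including any further zero-cross-degree cliques, does not affect the sum, since \eqref{eq:scm-betti} is symmetric in $m\ge2$.
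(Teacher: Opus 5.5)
Your proposal is correct and follows the paper's proof essentially step for step. It uses the same identification of the maximal cliques of $\overline G$, the same two cases (all $n_k\ge1$ with $U$ as the initial clique, and some $n_j=0$ with $U\cup\{x_j\}$ as the initial clique), substitution into \eqref{eq:scm-betti}, and the same Pascal identity $\binom{n-1}{i}-\binom{n}{i+1}=-\binom{n-1}{i+1}$ to absorb the missing $k=j$ term. Your extra checks are correct and a bit more careful than the paper's write-up: that \eqref{eq:scm-betti} depends only on the multiset of clique sizes, so ties in the ordering are harmless, and the boundary cases $U=\varnothing$ and $n=1$.
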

\begin{proof}
Write $m=|U|$ and $N=n+m$.  If all $n_k>0$, the maximal cliques of $\overline G$ are $U$ and the sets
\[
F_k=\{x_k\}\cup(U\setminus N_G(x_k)),
\qquad |F_k|=m-n_k+1.
\]
Begin with $U$ and order the remaining cliques by nondecreasing $n_k$.  Formula~\eqref{eq:scm-betti} then gives the asserted expression.

Suppose next that $n_j=0$ for some $j$.  Start with $F_j=U\cup\{x_j\}$, place the remaining cliques in nonincreasing cardinality, and apply formula~\eqref{eq:scm-betti} to obtain
\[
\beta_i
=
\sum_{k\ne j}\binom{n+n_k-1}{i}
-
\binom{n-1}{i+1}.
\]
Since
\[
-\binom{n-1}{i+1}
=
\binom{n-1}{i}-\binom{n}{i+1}
\]
and $n_j=0$, this is again the stated formula.
\end{proof}

\begin{remark}\label{rem:split-literature}
	The formula in Theorem~\ref{thm:split_betti} is obtained directly from the ridge-gluing description of the maximal cliques of the complement.  Betti numbers of split graphs have also been studied by Singh and Verma \cite{singh2020}, and Fr\"oberg also attempted to work on split graphs \cite{froeberg2026}.  The present formulation uses the $(d_1,\ldots,d_q)$-tree framework and includes vertices of zero cross-degree.
\end{remark}
	
	\begin{corollary}\label{cor:split_pdim}
Under the hypotheses of Theorem~\ref{thm:split_betti},
\[
\operatorname{pdim}(S/I(G))
=n+\max_{1\le k\le n}n_k-1.
\]
\end{corollary}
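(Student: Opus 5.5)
The quickest route is Corollary~\ref{cor:pdim_max_degree}. By Theorem~\ref{thm:split_is_scm}, $G$ is co-chordal and sequentially Cohen--Macaulay, so
\[
\operatorname{pdim}(S/I(G))=\operatorname{maxdeg}(G).
\]
It then remains to identify $\operatorname{maxdeg}(G)$ with $n+\max_k n_k-1$.

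A vertex $x_k\in C$ has degree $(n-1)+n_k$ in $G$, since it is adjacent to every other vertex of the clique $C$ and to exactly $n_k$ vertices of $U$. A vertex $u\in U$ has neighbours only in $C$, so $\deg u\le n$. Thus when $\max_k n_k\ge1$, the vertices of $C$ dominate and $\operatorname{maxdeg}(G)=n+\max_k n_k-1$.

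The main obstacle is the degenerate case $\max_k n_k=0$, meaning there are no edges between $C$ and $U$. Here the target value is $n-1$, and vertices of $U$ are isolated, so the maximum degree is still $n-1$ (including $n=1$, where it is $0$). The only subtlety is that the edgeless case $n=1$, $U$ unrestricted, has pdim $0=n-1$, consistent with the corollary's edgeless convention.

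As a cross-check, I would independently read the answer from Theorem~\ref{thm:split_betti}. The top index at which $\sum_k\binom{n+n_k-1}{i}$ is nonzero is $i=n+\max n_k-1$, where the term equals at least $1$. The subtracted term $\binom{n}{i+1}$ vanishes there as soon as $i+1>n$, i.e.\ when $\max n_k\ge1$. When $\max n_k=0$, one has $\beta_{n-1}=n-\binom{n}{n}=n-1>0$ for $n\ge2$, which again gives the claimed value.
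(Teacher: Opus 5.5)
Your proof is correct and is essentially the paper's argument: you invoke Theorem~\ref{thm:split_is_scm} and Corollary~\ref{cor:pdim_max_degree}, then compute $\operatorname{maxdeg}(G)$ by comparing the clique-vertex degrees $(n-1)+n_k$ with the bound $\deg u\le n$ for $u\in U$, handling the case where all $n_k=0$ separately. Your cross-check via the top nonvanishing index of the formula in Theorem~\ref{thm:split_betti} is also valid, though not needed.
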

\begin{proof}
Every $x_k\in C$ has degree $(n-1)+n_k$, whereas every vertex of $U$ has degree at most $n$.  If all $n_k=0$, the maximum degree is $n-1$; otherwise a clique vertex of maximum cross-degree has degree at least $n$.  Thus
\[
\operatorname{maxdeg}(G)=n+\max_k n_k-1,
\]
and Corollary~\ref{cor:pdim_max_degree} applies.
\end{proof}
	
	\begin{corollary}\label{cor:cm_split}
		Let $G$ be a split graph with vertex partition $V(G)=C \sqcup U$, and let $|C|=n$ and $n_k= |N_G(x_k)\cap U|$ for each $x_k \in C$.
		Then $S/I(G)$ is Cohen--Macaulay if and only if either $n_k=0$ for all $k$, or $n_k=1$ for all $k$. In these cases, the nonzero graded Betti numbers satisfy $\beta_{i,i+1}=\beta_i$ and are given by
		\[
		\beta_i(S/I(G)) = 
		\begin{cases}
			i\binom{n}{i+1} & \text{if } n_k=0 \text{ for all } k, \\[6pt]
			i\binom{n+1}{i+1} & \text{if } n_k=1 \text{ for all } k,
		\end{cases}
		\]
		for all $i\ge1$.
	\end{corollary}
	
	\begin{proof}
		By Theorem~\ref{thm:cm_characterization}, the quotient $S/I(G)$ is Cohen--Macaulay if and only if $\overline{G}$ is a pure $d$-tree. Let $V(G)=C\sqcup U$, where $C$ is a clique and $U$ is an independent set in $G$. Then $U$ induces a clique in $\overline{G}$. For each vertex $x_k\in C$, define
		\[
		F_k=\{x_k\}\cup\bigl(U\setminus N_G(x_k)\bigr).
		\]
		Since $x_k$ is adjacent in $\overline{G}$ precisely to the vertices of $U\setminus N_G(x_k)$, each $F_k$ is a maximal clique of $\overline{G}$ with
		\[
		|F_k|=|U|-n_k+1,
		\]
		where $n_k=|N_G(x_k)\cap U|$.
		
		Assume first that $n_j=0$ for some $j$. Then $U\subset F_j$, so $U$ is not a maximal clique. The maximal cliques are $F_1,\ldots,F_n$. Since $|F_j|=|U|+1$ and $\overline G$ is pure, every $F_k$ must have cardinality $|U|+1$. Hence $n_k=0$ for all $k$, and there are exactly $n$ maximal cliques.
		
		Assume next that $n_k>0$ for every $k$. Then no $F_k$ contains $U$, so $U$ is a maximal clique of cardinality $|U|$. Purity gives
		\[
		|U|-n_k+1=|U|
		\]
		for every $k$, and therefore $n_k=1$ for all $k$. In this case the maximal cliques are $U,F_1,\ldots,F_n$, so there are exactly $n+1$ of them.
		
		Therefore, $\overline{G}$ is pure if and only if either $n_k=1$ for all $k$ or $n_k=0$ for all $k$. The corresponding numbers of facets are $n+1$ and $n$, respectively. Substituting these values into the Betti number formula of Theorem~\ref{thm:cm_betti_iff} yields
		\[
		\beta_i(S/I(G))=i\binom{n+1}{i+1}
		\]
		when $n_k=1$ for all $k$, and
		\[
		\beta_i(S/I(G))=i\binom{n}{i+1}
		\]
		when $n_k=0$ for all $k$, completing the proof.
	\end{proof}
	
	\begin{remark}
		Villarreal \cite{villarreal1990} proved that attaching a whisker (a pendant edge) to every vertex of a graph yields a Cohen--Macaulay ring. When the base graph is a complete graph $K_n$, the resulting whisker graph is a split graph. Our characterization establishes the converse: a split graph where $|C| = |U|$ (with no isolated vertices) is Cohen--Macaulay if and only if it arises as the whisker graph of a complete graph. Hence, within this symmetric subclass of split graphs, Villarreal's construction precisely characterizes Cohen--Macaulayness.
	\end{remark}
	
	\begin{example}
		Consider the split graph $G$ on $N=7$ vertices shown in Figure~\ref{fig:split_example}. Its vertex set is partitioned into a clique $C=\{x_1,x_2,x_3\}$ of size $n=3$ and an independent set $U=\{y_1,y_2,y_3,y_4\}$ of size $m=4$. From the graph, the bipartite neighborhood sizes are:
		\[
		n_1=|N_G(x_1)\cap U|=4,\quad n_2=|N_G(x_2)\cap U|=2,\quad n_3=|N_G(x_3)\cap U|=1.
		\]
		
		\begin{figure}[htpb]
			\centering
			\begin{tikzpicture}[
				vertex/.style={circle, draw, fill=black, inner sep=0pt, minimum size=5pt},
				edge/.style={thick}
				]
				
				\draw[dashed, rounded corners=10pt, fill=gray!5] (-1.0, 2.2) rectangle (1.0, -2.2);
				\node[font=\normalsize\bfseries] at (0, 2.6) {$C$ (Clique)};
				
				\draw[dashed, rounded corners=10pt, fill=gray!5] (2.5, 2.8) rectangle (4.5, -2.8);
				\node[font=\normalsize\bfseries] at (3.5, 3.2) {$U$ (Independent Set)};
				
				\node[vertex, label=left:{$x_1$}] (x1) at (0, 1.2) {};
				\node[vertex, label=left:{$x_2$}] (x2) at (0, 0) {};
				\node[vertex, label=left:{$x_3$}] (x3) at (0, -1.2) {};
				
				\node[vertex, label=right:{$y_1$}] (y1) at (3.5, 1.8) {};
				\node[vertex, label=right:{$y_2$}] (y2) at (3.5, 0.6) {};
				\node[vertex, label=right:{$y_3$}] (y3) at (3.5, -0.6) {};
				\node[vertex, label=right:{$y_4$}] (y4) at (3.5, -1.8) {};
				
				\draw[edge] (x1) -- (x2);
				\draw[edge] (x2) -- (x3);
				\draw[edge] (x1) edge[bend right=45] (x3);
				\draw[edge] (x1) -- (y1);
				\draw[edge] (x1) -- (y2);
				\draw[edge] (x1) -- (y3);
				\draw[edge] (x1) -- (y4);
				\draw[edge] (x2) -- (y2);
				\draw[edge] (x2) -- (y3);
				\draw[edge] (x3) -- (y4);
				
			\end{tikzpicture}
			\vspace{0.3cm} 
			\caption{The split graph structure of $G$.}
			\label{fig:split_example}
		\end{figure}
		
		Applying Theorem~\ref{thm:split_betti} to $G$ yields the expression
		\[
		\beta_i(S/I(G)) = \binom{6}{i}+\binom{4}{i}+\binom{3}{i}-\binom{3}{i+1}, \quad \text{for } i \ge1.
		\]
		Since $\beta_0(S/I(G))=1$, evaluating this expression yields the following Betti table for $S/I(G)$:
		\begin{center}
			\begin{tabular}{r|ccccccc} 
				& 0 & 1 & 2 & 3 & 4 & 5 & 6 \\
				\text{total:} & 1 & 10 & 23 & 25 & 16 & 6 & 1 \\\hline
				0 & 1 & $\cdot$ & $\cdot$ & $\cdot$ & $\cdot$ & $\cdot$ & $\cdot$ \\
				1 & $\cdot$ & 10 & 23 & 25 & 16 & 6 & 1
			\end{tabular}
		\end{center}
		
		Since $\beta_6\neq 0$ and $\beta_i=0$ for all $i>6$, it follows that $\operatorname{pdim}(S/I(G))=6$. This agrees with Corollary~\ref{cor:split_pdim}, which establishes $\operatorname{pdim}(S/I(G)) = n + \max\{n_k\} - 1 = 3 + 4 - 1 = 6$.
	\end{example}
	
	\subsection{Complete, Single-Edge-Deleted, and Pineapple Graphs}
	
	The exact expression in Theorem~\ref{thm:split_betti} admits several useful specializations corresponding to uniform or almost uniform bipartite attachments between the clique and the independent set.  The following formulas are direct consequences of that theorem; see Remark~\ref{rem:split-literature} for related literature.
	
	\begin{corollary}[Complete split graphs]\label{cor:complete_split}
		Let $G$ be a complete split graph, i.e., every vertex of $C$ is adjacent to every vertex of $U$. If $|C|=n$ and $|U|=m$, then
		\[
		\beta_i(S/I(G)) = n\binom{n+m-1}{i} - \binom{n}{i+1}, \quad \text{for } i \ge1.
		\]
	\end{corollary}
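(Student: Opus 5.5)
This is a direct specialization of Theorem~\ref{thm:split_betti}. In a complete split graph, every vertex $x_k\in C$ is adjacent to all of $U$. Hence $n_k=|N_G(x_k)\cap U|=m$ for every $k=1,\ldots,n$. Substituting this value into the formula of Theorem~\ref{thm:split_betti} makes each of the $n$ summands equal to $\binom{n+m-1}{i}$, so
\[
\beta_i(S/I(G))=n\binom{n+m-1}{i}-\binom{n}{i+1}\qquad(i\ge1),
\]
which is the claimed identity. The hypothesis $n\ge1$ of Theorem~\ref{thm:split_betti} is implicit here, since $C$ is assumed nonempty.

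The one point to check is the degenerate case $m=0$. Then $n_k=0$ for all $k$, $G=K_n$, and the formula reads $n\binom{n-1}{i}-\binom{n}{i+1}$. By Corollary~\ref{cor:cm_split}, or directly from Theorem~\ref{thm:cm_betti_iff} with $e=n$, this should equal $i\binom{n}{i+1}$. This follows from $n\binom{n-1}{i}=(i+1)\binom{n}{i+1}$.

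Theorem~\ref{thm:split_betti} already covers the zero-cross-degree case uniformly, so no separate argument is really needed. I expect no genuine obstacle beyond confirming that the substitution $n_k=m$ is valid for every $k$.

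As a sanity check, I would compare with Corollary~\ref{cor:split_pdim}. It gives $\operatorname{pdim}=n+m-1$, which matches the top index at which $\binom{n+m-1}{i}\ne0$, since $\binom{n}{n+m}=0$ when $m\ge1$.
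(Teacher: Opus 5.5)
Your proposal is correct and matches the paper's proof: substitute $n_k=m$ for every $k$ into Theorem~\ref{thm:split_betti} and collect the $n$ equal summands. Your extra checks are also correct but not needed: the case $m=0$ via $n\binom{n-1}{i}=(i+1)\binom{n}{i+1}$, and the projective dimension $n+m-1$.
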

	
	\begin{proof}
		In a complete split graph, each vertex $x_k \in C$ satisfies $n_k=m$ for all $1 \le k \le n$. Hence,
		\[
		n+n_k-1=n+m-1,\qquad 1\le k\le n.
		\]
		Therefore the summation in Theorem~\ref{thm:split_betti} becomes
		\[
		\sum_{k=1}^n \binom{n+n_k-1}{i} = \sum_{k=1}^n \binom{n+m-1}{i}
		= n\binom{n+m-1}{i},
		\]
		yielding the stated formula.
	\end{proof}
	
	\begin{corollary}[Single-edge-deleted complete split graphs]
		Let $G$ be a split graph in which exactly one edge between $C$ and $U$ is missing. Then
		\[
		\beta_i(S/I(G)) = (n-1)\binom{n+m-1}{i} + \binom{n+m-2}{i} - \binom{n}{i+1}, \quad \text{for } i \ge1.
		\]
	\end{corollary}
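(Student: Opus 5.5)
This is a direct specialization of Theorem~\ref{thm:split_betti}, in the same way that Corollary~\ref{cor:complete_split} was. The plan is to read off the cross-degrees $n_k=|N_G(x_k)\cap U|$ and substitute them into the general split formula. Write $|C|=n$ and $|U|=m$, as in Corollary~\ref{cor:complete_split}. Suppose the missing edge is $x_jy$ with $x_j\in C$ and $y\in U$. Then every $x_k$ with $k\ne j$ is adjacent to all of $U$, so $n_k=m$ for those $n-1$ vertices. The remaining vertex has $n_j=m-1$.

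Substituting into Theorem~\ref{thm:split_betti} gives
\[
\sum_{k=1}^{n}\binom{n+n_k-1}{i}=(n-1)\binom{n+m-1}{i}+\binom{n+m-2}{i}.
\]
Subtracting $\binom{n}{i+1}$ then yields exactly the asserted formula for every $i\ge1$.

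The one point to check is that Theorem~\ref{thm:split_betti} applies in the boundary case $m=1$. There $n_j=0$, and Theorem~\ref{thm:split_betti} explicitly covers zero cross-degrees: the zero-cross-degree case of its proof reduces to the same uniform expression. So no separate argument is needed, and $\binom{n+m-2}{i}=\binom{n-1}{i}$ is simply the term with $n_j=0$.

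One should also confirm that the split partition $C\sqcup U$ is the one with respect to which the missing edge is counted. This is implicit in the hypothesis, and Theorem~\ref{thm:split_betti} holds for any fixed split partition, so the identification is automatic. Beyond that there is no real obstacle: the argument is a bookkeeping substitution.
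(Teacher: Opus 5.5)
Your proposal is correct and matches the paper's proof: read off the cross-degrees ($n_j=m-1$ for the clique vertex on the missing edge, $n_k=m$ for the other $n-1$ vertices) and substitute into Theorem~\ref{thm:split_betti}. Your explicit check of the boundary case $m=1$, where $n_j=0$ and the zero-cross-degree branch of that theorem applies, is a sound addition that the paper leaves implicit.
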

	
	\begin{proof}
		Exactly one vertex, say $x_1 \in C$, satisfies $n_1=m-1$, while $n_k=m$ for all $k\ge 2$. Hence the summation splits as
		\[
		\sum_{k=1}^n \binom{n+n_k-1}{i}
		= \binom{n+m-2}{i} + \sum_{k=2}^n \binom{n+m-1}{i}.
		\]
		The second term contributes $(n-1)\binom{n+m-1}{i}$, and substituting into Theorem~\ref{thm:split_betti} gives the result.
	\end{proof}
	
	\begin{corollary}[Pineapple graphs]
		Let $\mathcal{P}_a^b$ be a pineapple graph consisting of a clique $K_a$ and a stable set of size $b$, where all vertices of the stable set are attached to a single fixed vertex in the clique. Then
		\[
		\beta_i(S/I(\mathcal{P}_a^b)) = \binom{a+b-1}{i} + (a-1)\binom{a-1}{i} - \binom{a}{i+1}, \quad \text{for } i \ge1.
		\]
	\end{corollary}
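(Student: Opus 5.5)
The plan is to realize the pineapple graph as a split graph and apply Theorem~\ref{thm:split_betti} directly, just as in Corollary~\ref{cor:complete_split}. Take $C=V(K_a)=\{x_1,\ldots,x_a\}$, with $x_1$ the distinguished vertex to which every vertex of the stable set is attached, and let $U$ be the stable set, so $|U|=b$. Then $C$ is a clique and $U$ is independent, so $\mathcal P_a^b$ is a split graph with $n=a$. Its cross-degrees are $n_1=|N(x_1)\cap U|=b$ and $n_k=0$ for $2\le k\le a$, since no other clique vertex has a neighbour in $U$.

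Next I substitute these values into the formula of Theorem~\ref{thm:split_betti}. The sum splits into two parts. The term $k=1$ contributes $\binom{a+b-1}{i}$. Each of the $a-1$ terms with $k\ge2$ contributes $\binom{a+0-1}{i}=\binom{a-1}{i}$. The subtracted term is $\binom{n}{i+1}=\binom{a}{i+1}$. Adding these gives exactly the stated expression for $i\ge1$.

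The only point needing care is the presence of zero cross-degrees. Theorem~\ref{thm:split_betti} was proved to cover that case: $U$ is then not a maximal clique of $\overline G$, and the identity $-\binom{n-1}{i+1}=\binom{n-1}{i}-\binom{n}{i+1}$ absorbs the difference. So no separate argument is needed, but I will say explicitly that the theorem applies when $n_k=0$.

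I will also check the degenerate cases. If $b=0$, the graph is $K_a$, and the formula gives $a\binom{a-1}{i}-\binom{a}{i+1}=i\binom{a}{i+1}$, which agrees with Corollary~\ref{cor:cm_split}. If $a=1$, the graph is a star, and the formula reduces to $\binom{b}{i}$, which is correct because the edge ideal is then a complete intersection in a cone variable. I will mention these checks briefly rather than prove them separately.
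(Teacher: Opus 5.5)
Your proposal is correct and matches the paper's proof: both take $C=V(K_a)$, read off $n_1=b$ and $n_k=0$ for $k\ge2$, and substitute into Theorem~\ref{thm:split_betti}, whose zero-cross-degree case already covers the $a-1$ clique vertices with no neighbours in $U$. In your star sanity check, note that $I=x_1(y_1,\ldots,y_b)$ is not itself a complete intersection. It is $x_1$ times one, so its resolution is a degree-shifted Koszul complex, and that is why $\beta_i=\binom{b}{i}$.
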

	
	\begin{proof}
		Let $C$ denote the clique with $|C|=a$. Exactly one vertex $x_1 \in C$ satisfies $n_1=b$, while all remaining vertices satisfy $n_k=0$ for $k\ge 2$. Hence
		\[
		\sum_{k=1}^a \binom{a+n_k-1}{i}
		= \binom{a+b-1}{i} + \sum_{k=2}^a \binom{a-1}{i}
		= \binom{a+b-1}{i} + (a-1)\binom{a-1}{i}.
		\]
		Substituting into Theorem~\ref{thm:split_betti} yields the stated expression.
	\end{proof}
	
	\begin{remark}
		Corollary~\ref{cor:cm_split} determines the Cohen--Macaulay members of the preceding subclasses. A complete split graph, for which $n_k=m$ for all $k$, is Cohen--Macaulay if and only if $m\in\{0,1\}$, and hence reduces to a pure complete graph in both cases.
		
		Single-edge-deleted complete split graphs and pineapple graphs fail the required uniform attachment condition due to non-constant attachment degrees. Consequently, a single-edge-deleted complete split graph with $n_1=m-1$ and $n_k=m$ for $k\ge2$ is Cohen--Macaulay only in the exceptional cases $n=1$ and $m\in\{1,2\}$. Likewise, a pineapple graph $\mathcal P_a^b$ with $n_1=b$ and $n_k=0$ for $k\ge2$ is Cohen--Macaulay precisely when $b=0$ or $(a,b)=(1,1)$.
	\end{remark}
	
	\subsection{Threshold Graphs}
	
	Threshold graphs form a distinguished subclass of split graphs. They may be constructed recursively from a single vertex by repeatedly adding either an isolated vertex or a dominating vertex \cite{nagel2009}. Equivalently, a split graph is threshold if and only if the neighborhoods of the clique vertices within the independent set are linearly ordered by inclusion.
	
	\begin{corollary}
		Let $G$ be a threshold graph with split partition $V(G)=C\sqcup U$. Let $C=\{x_1,\ldots,x_n\}$ be ordered so that $n_1\ge n_2\ge \cdots \ge n_n$, where $n_k=|N_G(x_k)\cap U|$. Then
		\[
		\beta_i(S/I(G)) = \sum_{k=1}^{n} \binom{n+n_k-1}{i} - \binom{n}{i+1}, \quad \text{for } i \ge1,
		\]
		and
		\[
		\operatorname{pdim}(S/I(G)) = n+n_1-1.
		\]
	\end{corollary}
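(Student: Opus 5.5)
The plan is to deduce the statement directly from the split-graph results, since every threshold graph is in particular a split graph. First I fix a split partition $V(G)=C\sqcup U$ with $C=\{x_1,\ldots,x_n\}$. If $n=0$, then $G$ is edgeless; this degenerate case is handled by the convention of Theorem~\ref{thm:seq_cm_betti_iff}, so I assume $n\ge1$. Relabelling the clique vertices so that $n_1\ge n_2\ge\cdots\ge n_n$ only permutes the summands in Theorem~\ref{thm:split_betti}. That theorem therefore yields the Betti formula verbatim:
\[
\beta_i(S/I(G))=\sum_{k=1}^{n}\binom{n+n_k-1}{i}-\binom{n}{i+1}\qquad(i\ge1).
\]
No use of the nested-neighborhood property is needed for this part.

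For the projective dimension, I apply Corollary~\ref{cor:split_pdim}, which gives
\[
\operatorname{pdim}(S/I(G))=n+\max_k n_k-1.
\]
With the chosen ordering, $\max_k n_k=n_1$, and the stated value $n+n_1-1$ follows.

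The only point to check is that the hypotheses of Theorem~\ref{thm:split_betti} and Corollary~\ref{cor:split_pdim} are satisfied: $G$ must be split with $n\ge1$. Threshold graphs are split by the characterization recalled before the statement, so there is essentially no obstacle.

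I may optionally remark that the nested neighborhoods $N_G(x_1)\cap U\supseteq\cdots\supseteq N_G(x_n)\cap U$ make the ridge-gluing order of Theorem~\ref{thm:split_is_scm} particularly transparent: $F_k$ is attached along $U\setminus N_G(x_k)$, and these sets increase with $k$. This is not required for the proof.
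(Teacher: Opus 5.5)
Your proposal is correct and follows the paper's proof: threshold graphs are split, so the Betti formula is Theorem~\ref{thm:split_betti}, and the projective dimension is Corollary~\ref{cor:split_pdim} with $\max_k n_k=n_1$. You are also right that nested neighborhoods are not needed to identify $n_1$ as the maximum, since the ordering hypothesis alone gives this even though the paper invokes nestedness, and that the statement implicitly requires $n\ge1$.
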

	\begin{proof}
		Since every threshold graph is a split graph, the Betti formula follows directly from Theorem~\ref{thm:split_betti}. Moreover, Corollary~\ref{cor:split_pdim} yields
		\[
		\operatorname{pdim}(S/I(G)) = n+\max_{1\le k\le n}\{n_k\}-1.
		\]
		Because the neighborhoods of the vertices in $C$ are nested, the chosen ordering implies
		\[
		n_1=\max_{1\le k\le n}\{n_k\},
		\]
		and the stated formula follows.
	\end{proof}
	
	\subsection{Prime Ideal Graphs}
	
	We next apply the preceding results to prime ideal graphs of finite commutative rings. Let $R$ be a finite commutative ring and let $P$ be a proper prime ideal of $R$. The \emph{prime ideal graph} $\Gamma_P(R)$ is the graph whose vertex set is $R\setminus\{0\}$, where two distinct vertices $x$ and $y$ are adjacent whenever $xy\in P$ \cite{salih2022,rasheed2026}.
	
	\begin{proposition}\label{prop:pig_complete_split}
Let $R$ be a finite commutative ring and $P$ a proper prime ideal.  Then
\[
\Gamma_P(R)\cong K_{|P|-1}\vee\overline K_{|R|-|P|}.
\]
In particular, $\Gamma_P(R)$ is a complete split graph.
\end{proposition}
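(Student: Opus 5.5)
We identify the two vertex classes of $\Gamma_P(R)$ and check adjacency within and between them. Put
\[
C=P\setminus\{0\},\qquad U=R\setminus P.
\]
These partition the vertex set $R\setminus\{0\}$, since $0\in P$. Moreover $|C|=|P|-1$ and $|U|=|R|-|P|$. The claim then reduces to three adjacency checks: $C$ is a clique, $U$ is an independent set, and every vertex of $C$ is adjacent to every vertex of $U$.

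The checks are short.

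\emph{First,} if $x,y\in C$ are distinct, then $xy\in P$ because $P$ is an ideal. The same holds whenever just one of the two factors lies in $P$. So $C$ is a clique, and every $x\in C$ is adjacent to every $y\in U$. This gives the join $K_{|P|-1}\vee(\text{graph on }U)$.

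\emph{Second,} if $x,y\in U$ are distinct, then $x,y\notin P$. Primality of $P$ gives $xy\notin P$, so $x$ and $y$ are not adjacent. Hence $U$ induces $\overline K_{|R|-|P|}$.

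Combining the two checks yields the stated isomorphism. The induced graph is a split graph with clique $C$ and independent set $U$, in which every clique vertex is adjacent to all of $U$. This is exactly the definition of a complete split graph used in Corollary~\ref{cor:complete_split}.

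The only point needing care is the degenerate case $P=0$. Then $C=\varnothing$ and the graph is edgeless on $|R|-1$ vertices, because a finite ring with zero prime ideal is an integral domain, hence a field. The formula $K_0\vee\overline K_{|R|-1}$ still holds with the convention that $K_0$ is the empty graph. I would remark that this case matches the split framework only with $n=0$, which Theorem~\ref{thm:split_betti} excludes. This is presumably the ``exceptional zero-prime case'' the paper treats separately.

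No real obstacle is expected. The argument uses only the ideal property of $P$ (absorbing products) and primality (excluding products of non-members). Note that $P$ proper guarantees $U\neq\varnothing$, since $1\in U$.
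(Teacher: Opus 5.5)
Your proof is correct and follows the paper's argument essentially verbatim: the same partition $C=P\setminus\{0\}$, $U=R\setminus P$, the absorption property of $P$ for the clique and the cross edges, and primality of $P$ for the independence of $U$. Your remark on the degenerate case $P=(0)$ is also accurate. It matches how the paper treats that case separately in the subsequent Betti-number theorem.
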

\begin{proof}
Put $C=P\setminus\{0\}$ and $U=R\setminus P$.  Products of two elements of $C$, and products of an element of $C$ with an element of $U$, lie in $P$.  Thus $C$ is a clique and every edge between $C$ and $U$ is present.  If $x,y\in U$, primality of $P$ implies $xy\notin P$, so $U$ is independent.  The stated join decomposition follows.  This description also appears in \cite{rasheed2026}.
\end{proof}
	
	The homological invariants now follow immediately from Corollaries \ref{cor:split_pdim} and \ref{cor:complete_split}.
	
	\begin{theorem}
Let $R$ be a finite commutative ring and $P$ a proper prime ideal.  Then $S/I(\Gamma_P(R))$ is sequentially Cohen--Macaulay and
\[
\beta_i(S/I(\Gamma_P(R)))
=(|P|-1)\binom{|R|-2}{i}-\binom{|P|-1}{i+1}
\qquad(i\ge1).
\]
Moreover,
\[
\operatorname{pdim}(S/I(\Gamma_P(R)))=
\begin{cases}
0,&P=(0),\\
|R|-2,&P\ne(0).
\end{cases}
\]
\end{theorem}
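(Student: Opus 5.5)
The plan is to combine Proposition~\ref{prop:pig_complete_split} with the split-graph results, treating the zero-prime case separately. By Proposition~\ref{prop:pig_complete_split}, $\Gamma_P(R)$ is the complete split graph with clique $C=P\setminus\{0\}$ of size $n=|P|-1$ and independent set $U=R\setminus P$ of size $m=|R|-|P|$. Its vertex count is $N=n+m=|R|-1$.

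Theorem~\ref{thm:split_is_scm} shows that every split graph is co-chordal and sequentially Cohen--Macaulay, which gives the first assertion at once. The degenerate case $n=0$ (when $P=(0)$) causes no problem: the graph is then edgeless, $\overline G$ is a single clique, and $S/I$ is a polynomial ring.

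For the Betti numbers with $n\ge1$, apply Corollary~\ref{cor:complete_split}. Since $n+m-1=|R|-2$, it gives
\[
\beta_i=(|P|-1)\binom{|R|-2}{i}-\binom{|P|-1}{i+1}.
\]
When $P=(0)$, Theorem~\ref{thm:split_betti} assumes $n\ge1$, so this case needs a direct check. Here $|P|-1=0$, so both terms vanish, which agrees with $\beta_i=0$ for the edgeless graph when $i\ge1$. Hence the formula holds uniformly.

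For the projective dimension with $P\ne(0)$, use Corollary~\ref{cor:split_pdim}. It gives $\operatorname{pdim}=n+\max_k n_k-1=n+m-1=|R|-2$, because every clique vertex has $n_k=m$. The corollary is valid even when $m=0$, but $m\ge1$ anyway, since $1\notin P$.

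When $P=(0)$ the graph is edgeless, so $\operatorname{pdim}=0$ by Corollary~\ref{cor:pdim_max_degree}, or directly.

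The only delicate point is this zero-prime boundary case. It falls outside the hypothesis $n\ge1$ of Theorem~\ref{thm:split_betti}, so it must be argued directly from edgelessness rather than read off the general formula.
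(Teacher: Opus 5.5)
Your proposal is correct and follows essentially the same route as the paper. Both use Proposition~\ref{prop:pig_complete_split} to reduce to a complete split graph, then Corollary~\ref{cor:complete_split} for the Betti numbers and Corollary~\ref{cor:split_pdim} for $\operatorname{pdim}=|R|-2$, and both handle $P=(0)$ directly as the edgeless graph of a finite field; your citation of Theorem~\ref{thm:split_is_scm} for the sequentially Cohen--Macaulay claim only makes explicit what the paper leaves implicit.
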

\begin{proof}
If $P=(0)$, then the finite ring $R$ is a field.  The graph is edgeless, the displayed Betti formula is zero, and the projective dimension is zero.

Assume $P\ne(0)$.  Proposition~\ref{prop:pig_complete_split} gives a complete split graph with clique size $|P|-1$ and independent-set size $|R|-|P|$.  Corollary~\ref{cor:complete_split} yields the Betti formula, and Corollary~\ref{cor:split_pdim} gives $\operatorname{pdim}=|R|-2$.
\end{proof}
	
	\begin{corollary}\label{cor:cm_prime_ideal}
		Let $R$ be a finite commutative ring and $P$ a prime ideal. Then $\Gamma_P(R)$ is Cohen--Macaulay if and only if $R$ is a finite field.
	\end{corollary}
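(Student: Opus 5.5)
The plan is to reduce the statement to Corollary~\ref{cor:cm_split} through the complete-split description in Proposition~\ref{prop:pig_complete_split}. I read ``$\Gamma_P(R)$ is Cohen--Macaulay'' as meaning that $S/I(\Gamma_P(R))$ is Cohen--Macaulay. Write $C=P\setminus\{0\}$ and $U=R\setminus P$, so that $n=|C|=|P|-1$ and $m=|U|=|R|-|P|$. Since every edge between $C$ and $U$ is present, every clique vertex has cross-degree $n_k=m$. Because $P$ is proper, $1\in U$, and so $m\ge1$. The argument splits according to whether $P$ is zero.

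\emph{Case $P=(0)$.} A finite ring with $(0)$ prime is a finite integral domain, hence a field. Here $C=\varnothing$ and the graph is edgeless. The Stanley--Reisner ring is then a polynomial ring, which is Cohen--Macaulay, as already noted in the preceding theorem. Conversely, if $R$ is a finite field, its only proper prime ideal is $(0)$, so we land in this case. This gives the ``if'' direction.

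\emph{Case $P\neq(0)$.} Now $n\ge1$ and all $n_k=m\ge1$. By Corollary~\ref{cor:cm_split}, $S/I(\Gamma_P(R))$ is Cohen--Macaulay if and only if all $n_k=1$, that is, $|R|-|P|=1$. I will show this is impossible. The ideal $P$ is an additive subgroup of $R$, so Lagrange gives $|P|\mid |R|=|P|+1$. This forces $|P|=1$, contradicting $P\ne(0)$. So in this case the quotient is never Cohen--Macaulay, and $R$ is not a field anyway, since it has a nonzero proper ideal. Together with the first case, this proves the equivalence.

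The only step needing care is excluding $m=1$. Beyond Lagrange, one can argue directly: $U=\{1\}$ would make $-1\in P$, hence $1\in P$, which is absurd, provided $-1\neq1$. The Lagrange argument avoids this characteristic-$2$ caveat, so I will use that. I also need to check that Corollary~\ref{cor:cm_split} is applied with $n\ge1$, which holds in the nonzero-prime case.
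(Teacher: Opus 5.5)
Your proposal is correct and follows essentially the same route as the paper. Both arguments use Proposition~\ref{prop:pig_complete_split} to reduce to a complete split graph with uniform cross-degree $|R|-|P|\ge1$, use Corollary~\ref{cor:cm_split} to force $|R|-|P|=1$, and rule this out by Lagrange's theorem (the paper writes this step as $|P|(|R/P|-1)=1$), while the case $P=(0)$ gives a finite field and an edgeless graph.
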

	
	\begin{proof}
		Suppose first that $\Gamma_P(R)$ is Cohen--Macaulay. If $P\neq (0)$, then Proposition~\ref{prop:pig_complete_split} yields
		\[
		\Gamma_P(R)\cong K_{|P|-1}\vee \overline{K}_{|R|-|P|}.
		\]
		By Corollary~\ref{cor:cm_split}, a Cohen--Macaulay split graph satisfies either $n_k=0$ for all $k$ or $n_k=1$ for all $k$. Since $\Gamma_P(R)$ is a complete split graph, every vertex of the clique is adjacent to every vertex of the independent set, and therefore
		\[
		n_k = |U| = |R| - |P| \quad \text{for all } k.
		\]
		Since $P$ is a proper ideal, $|R| > |P|$. Thus, the condition $|R|-|P| = 0$ is impossible, meaning the case $n_k=0$ cannot occur. Consequently, $n_k=1$ for all $k$, which implies
		\[
		|R|-|P|=1.
		\]
		Using $|R|=|P||R/P|$, we obtain
		\[
		|P|(|R/P|-1)=1,
		\]
		which has no solution with $|P|\ge 2$. Hence $P=(0)$. Therefore $(0)$ is a prime ideal, so $R$ is an integral domain. Since $R$ is finite, it follows that $R$ is a finite field.
		
		Conversely, let $R=\mathbb F_q$. Then $P=(0)$ is the unique prime ideal. Because every nonzero element of $\mathbb F_q$ is a unit, there are no nonzero zero-divisors. Consequently,
		\[
		\Gamma_{(0)}(\mathbb F_q)\cong \overline{K}_{q-1}.
		\]
		Its edge ideal is zero, and hence
		\[
		S/I(\Gamma_{(0)}(\mathbb F_q))
		\cong S,
		\]
		which is Cohen--Macaulay. Therefore $\Gamma_P(R)$ is Cohen--Macaulay.
	\end{proof}
	
	\begin{remark}
		While Corollary~\ref{cor:cm_prime_ideal} shows that $\Gamma_P(R)$ is Cohen--Macaulay whenever $R$ is a finite field $\mathbb F_q$, the resulting graph is the edgeless graph $\overline{K}_{q-1}$. In particular, it is disconnected for all $q>2$. If one further imposes connectivity, then necessarily $q-1=1$, i.e., $q=2$. Consequently, a prime ideal graph $\Gamma_P(R)$ is both Cohen--Macaulay and connected if and only if $R \cong \mathbb Z_2$.
	\end{remark}
	
	\section{Homological Invariants of Nilpotent and Zero-Divisor Graphs} \label{sec:zero_divisors}
	
	Let $R$ be a commutative ring with identity.  The zero-divisor graph $\Gamma(R)$, in the sense of Anderson and Livingston \cite{anderson1999} following Beck's earlier coloring formulation \cite{beck1988}, has the nonzero zero-divisors as vertices, with distinct vertices adjacent when their product is zero.  We write $\Gamma_{\operatorname{Nil}}(R)$ for the induced subgraph of $\Gamma(R)$ on $\operatorname{Nil}(R)\setminus\{0\}$; thus its vertices are the nonzero nilpotent elements and adjacency is again defined by $xy=0$.  This terminology is fixed for the present paper.  Recent work on linear resolutions and Betti numbers of zero-divisor graphs includes \cite{dung2026,rather2024_zd,rather2026}.  The structural results above permit exact calculations whenever a split or threshold decomposition is available.
	
	\subsection{Nilpotent Graphs of Artinian Rings}
	We characterize exactly when the nilpotent graph belongs to the sequentially Cohen--Macaulay co-chordal class and then compute its homological invariants in the equivalent threshold range.
	
\begin{theorem}\label{thm:master_nilpotent}
Let $R\cong\prod_{i=1}^{k}R_i$ be a finite direct product of commutative Artinian chain rings.  Let $\mathfrak m_i$ be the maximal ideal of $R_i$, let $a_i$ be its nilpotency index, and put $q_i=|R_i/\mathfrak m_i|$.  Set
\[
r=\bigl|\{i:a_i\ge3\}\bigr|.
\]
Then the following conditions are equivalent.
\begin{enumerate}
\item $\Gamma_{\operatorname{Nil}}(R)$ is co-chordal and $S/I(\Gamma_{\operatorname{Nil}}(R))$ is sequentially Cohen--Macaulay.
\item $\Gamma_{\operatorname{Nil}}(R)$ is a threshold graph.
\item At most one factor $R_i$ has $a_i\ge3$, equivalently $r\le1$.
\end{enumerate}

Suppose that these conditions hold and that exactly one factor, say $R_1$, has $a_1\ge3$.  Put
\[
M=\prod_{i=2}^{k}q_i^{a_i-1}.
\]
Then
\begin{equation}\label{eq:nil_product}
\beta_i(S/I(\Gamma_{\operatorname{Nil}}(R)))
=
\sum_{t=\lceil a_1/2\rceil}^{a_1-1}|W_t|
\binom{|C|+n_t-1}{i}
-
\binom{|C|}{i+1},
\end{equation}
where
\[
|W_t|=
\begin{cases}
M(q_1^{a_1-t}-q_1^{a_1-t-1}),&t<a_1-1,\\
Mq_1-1,&t=a_1-1,
\end{cases}
\]
and
\[
|C|=Mq_1^{\lfloor a_1/2\rfloor}-1,
\qquad
n_t=M(q_1^t-q_1^{\lfloor a_1/2\rfloor}).
\]
\end{theorem}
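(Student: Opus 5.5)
The plan is to translate adjacency in $\Gamma_{\operatorname{Nil}}(R)$ into a condition on valuation vectors and then read off a split (indeed threshold) structure. Write $\pi_i$ for a uniformizer of $\mathfrak m_i$ and let $v_i(x_i)\in\{1,\ldots,a_i\}$ be the valuation of the $i$-th coordinate of a nilpotent $x=(x_1,\ldots,x_k)$, with $v_i=a_i$ meaning $x_i=0$. Every nonzero nilpotent has all $v_i\ge1$. Two distinct vertices $x,y$ are adjacent iff $v_i(x)+v_i(y)\ge a_i$ for every $i$. For a factor with $a_i\le2$ this inequality holds automatically, since $\mathfrak m_i^2=0$. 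Hence only the factors with $a_i\ge3$ influence the edge set.

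For (3)$\Rightarrow$(2), I would split into two cases. If $r=0$ the graph is complete, hence threshold. If $r=1$, say $a_1\ge3$, adjacency is governed by the single weight rule $v_1(x)+v_1(y)\ge a_1$. This is a threshold rule: the neighborhoods are nested according to $v_1$, and the set $C=\{x:v_1(x)\ge\lceil a_1/2\rceil\}$ is a clique. Its complement $U$ is independent, because $v_1(x)+v_1(y)\le 2\lceil a_1/2\rceil-2<a_1$ for $x,y\in U$. The implication (2)$\Rightarrow$(1) is Theorem~\ref{thm:split_is_scm}, since threshold graphs are split.

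For (1)$\Rightarrow$(3), assume $r\ge2$ with $a_1,a_2\ge3$; I must exhibit a failure of either co-chordality or sequential Cohen--Macaulayness. First I will look for an induced $2K_2$ in $\Gamma_{\operatorname{Nil}}(R)$, i.e.\ an induced $C_4$ in the complement, which kills co-chordality. The natural seeds are the cross-valuation pairs $(\pi_1^{a_1-1},\pi_2)\sim(\pi_1,\pi_2^{a_2-1})$ and $(\pi_1^{a_1-1}\,\cdot\,,\pi_2^{s})\sim(\pi_1^{s'},\ldots)$, with valuations chosen so that the two edges interact only through coordinates where the sums drop below $a_i$. Since adjacency depends only on valuation vectors, this reduces to a finite search over valuation pairs in $\{1,\ldots,a_1\}\times\{1,\ldots,a_2\}$.

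This is the step I expect to be the main obstacle. Naive choices with repeated valuation vectors produce extra edges, so the search must be done carefully. If no $2K_2$ exists for some small $a_i$, I would fall back on Lemma~\ref{lem:ridge-obstruction}. The plan there is to locate two maximal cliques of the complement, indexed by the two "incomparable" neighborhood types coming from coordinates $1$ and $2$, and show that each shares no ridge with any other facet. That is where the lack of nestedness (non-threshold behaviour) should be converted into the obstruction.

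For the Betti formula, with $r=1$ I apply Theorem~\ref{thm:split_betti} to the partition $C\sqcup U$ above. Counting coordinatewise gives $|C|=Mq_1^{\lfloor a_1/2\rfloor}-1$, since coordinate $1$ ranges over $\mathfrak m_1^{\lceil a_1/2\rceil}$, of size $q_1^{\lfloor a_1/2\rfloor}$, the remaining coordinates give $M$, and we remove $0$. A clique vertex with $v_1=t$ is adjacent in $U$ exactly to the elements with $a_1-t\le v_1<\lceil a_1/2\rceil$. There are $n_t=M(q_1^{t}-q_1^{\lfloor a_1/2\rfloor})$ of these. The valuations $t=a_1-1$ and $t=a_1$ (the latter meaning coordinate zero) both give the full $U$, so they merge into one class $W_{a_1-1}$ of size $Mq_1-1$. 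For $t<a_1-1$ the class $W_t$ has size $M(q_1^{a_1-t}-q_1^{a_1-t-1})$. Grouping the sum in Theorem~\ref{thm:split_betti} by these classes yields \eqref{eq:nil_product}.
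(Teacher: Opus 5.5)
Your arguments for (3)$\Rightarrow$(2), (2)$\Rightarrow$(1) and the Betti formula match the paper's. You use the same valuation rule and the same split partition $C\sqcup U$, merge the valuation-$(a_1-1)$ layer with the vertices whose first coordinate is zero, and then apply Theorem~\ref{thm:split_betti}.

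The genuine gap is (1)$\Rightarrow$(3), which is the real content of the theorem. You describe it only as a search still to be done: no induced $4$-cycle in $\overline G$ and no ridge-isolated facet is ever exhibited. Let $s$ be the number of indices with $a_i\ge 4$. The paper needs three separate $C_4$ constructions:
\begin{itemize}
\item For $s\ge2$, with indices $a,b\ge4$: vertices with valuation pairs $(1,b-1)$, $(u,v)$, $(a-1,1)$, $(u,v)$, where $u=\lceil a/2\rceil$ and $v=\lceil b/2\rceil$. The repeated deep vector $(u,v)$ is exactly what is wanted, so repeated valuation vectors are not the danger you suggest. The condition $1+u<a$ is what forces $a\ge4$.
\item For $s=1$, $r\ge3$: the triples $(1,2,2)$, $(\lceil a/2\rceil,1,2)$, $(a-1,1,1)$, $(\lceil a/2\rceil,2,1)$.
\item For $s=0$, $r\ge4$: four vertices with shallow supports $\{1,2\},\{2,3\},\{3,4\},\{4,1\}$.
\end{itemize}
Your first seed, the pair $(\pi_1^{a_1-1},\pi_2)$, $(\pi_1,\pi_2^{a_2-1})$, is one edge of the first configuration, but your second edge is left unspecified.

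More seriously, in the residual cases the complement is actually chordal, so no induced $2K_2$ exists and the fallback to Lemma~\ref{lem:ridge-obstruction} is forced. These cases are $r\in\{2,3\}$ with every active index equal to $3$, and $r=2$ with active indices $a\ge4$ and $3$. Your proposal neither isolates these cases nor names the two facets.

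When all active indices equal $3$, the facets are $L_i=\{x:\nu_i(x_i)=1\}$. Each is ridge-isolated: the block of vertices with valuation $1$ in coordinate $i$ and zero elsewhere has at least $q_i^2-q_i\ge2$ elements, and all their $\overline G$-neighbours lie in $L_i$.

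In the mixed case, one facet is $L=\{x:\nu_2(x_2)=1\}$. The second is
\[
Q=\{x\notin L:\nu_1(x_1)\le\lfloor(a-1)/2\rfloor\}\cup\{x\in L:\nu_1(x_1)\le\lfloor a/2\rfloor\}.
\]
For even $a$ this genuinely mixes the two coordinates, so your heuristic of one facet per coordinate does not produce it. Its maximality and ridge-isolation come from two boundary blocks, $\{x\notin L:\nu_1=\lfloor(a-1)/2\rfloor\}$ and $\{x\in L:\nu_1=\lfloor a/2\rfloor\}$, each of size at least $2$. Until these constructions are supplied, (1)$\Rightarrow$(3) is not proved.
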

\begin{proof}
For each $i$, choose a generator $\pi_i$ of $\mathfrak m_i$ and define, for $x_i\in\mathfrak m_i$,
\[
\nu_i(x_i)=
\begin{cases}
t,&x_i\in\mathfrak m_i^t\setminus\mathfrak m_i^{t+1},\\
a_i,&x_i=0.
\end{cases}
\]
Since $R_i$ is a chain ring,
\begin{equation}\label{eq:valuation-zero-product}
x_iy_i=0
\quad\Longleftrightarrow\quad
\nu_i(x_i)+\nu_i(y_i)\ge a_i.
\end{equation}
The vertex set of $G:=\Gamma_{\operatorname{Nil}}(R)$ is
\[
V(G)=\left(\mathfrak m_1\times\cdots\times\mathfrak m_k\right)\setminus\{0\}.
\]
Consequently, two distinct vertices $x$ and $y$ are adjacent in $H:=\overline G$ if and only if
\begin{equation}\label{eq:nil-complement-adjacency}
\nu_i(x_i)+\nu_i(y_i)<a_i
\quad\text{for at least one }i.
\end{equation}
A coordinate with $a_i\le2$ never contributes an edge to $H$.

Assume first that condition~(3) holds.  If $r=0$, the product of any two nilpotent elements is zero, so $G$ is complete and therefore threshold.  If $r=1$, relabel the factors so that $a_1\ge3$.  Equation~\eqref{eq:valuation-zero-product} gives
\[
xy=0
\quad\Longleftrightarrow\quad
\nu_1(x_1)+\nu_1(y_1)\ge a_1,
\]
because the remaining coordinates always have zero product.  Thus $G$ has a threshold-weight representation with vertex weight $\nu_1(x_1)$ and threshold $a_1$.  This proves $(3)\Rightarrow(2)$.

Threshold graphs are closed under complementation, and every threshold graph is split.  Hence $G$ is co-chordal, and Theorem~\ref{thm:split_is_scm} implies that $S/I(G)$ is sequentially Cohen--Macaulay.  Therefore $(2)\Rightarrow(1)$.

We prove $(1)\Rightarrow(3)$.  Assume that $H$ is chordal and that $S/I(G)$ is sequentially Cohen--Macaulay.  Suppose, toward a contradiction, that $r\ge2$, and set
\[
s=\bigl|\{i:a_i\ge4\}\bigr|.
\]
We first use chordality to restrict the possible active nilpotency indices.

If $s\ge2$, choose active coordinates with indices $a,b\ge4$, and put
\[
u=\left\lceil\frac a2\right\rceil,
\qquad
v=\left\lceil\frac b2\right\rceil.
\]
Choose four vertices whose valuations in these two coordinates are
\[
(1,b-1),\qquad (u,v),\qquad (a-1,1),\qquad (u,v),
\]
using distinct elements for the two vertices with valuation pair $(u,v)$ and setting all other coordinates equal to zero.  By~\eqref{eq:nil-complement-adjacency}, consecutive vertices are adjacent in $H$, while the opposite pairs are nonadjacent.  They therefore induce a $4$-cycle, contrary to chordality.

If $s=1$ and $r\ge3$, choose active coordinates with indices $a\ge4,3,3$ and vertices with valuation triples
\[
(1,2,2),\qquad
\left(\left\lceil\frac a2\right\rceil,1,2\right),\qquad
(a-1,1,1),\qquad
\left(\left\lceil\frac a2\right\rceil,2,1\right).
\]
Again, consecutive pairs and only consecutive pairs satisfy~\eqref{eq:nil-complement-adjacency}; hence these vertices induce a $4$-cycle.

If $s=0$ and $r\ge4$, choose four active coordinates, all of nilpotency index $3$.  For a vertex $x$, let its shallow support be the set of these coordinates in which $\nu_i(x_i)=1$, taking valuation $2$ in the remaining active coordinates.  Vertices with shallow supports
\[
\{1,2\},\qquad \{2,3\},\qquad \{3,4\},\qquad \{4,1\}
\]
induce a $4$-cycle, because two such vertices are adjacent in $H$ exactly when their shallow supports intersect.

It follows that, under the assumption $r\ge2$, chordality leaves only the following possibilities:
\begin{enumerate}
\item $r\in\{2,3\}$ and every active index equals $3$;
\item $r=2$, one active index is at least $4$, and the other equals $3$.
\end{enumerate}
We show that both possibilities contradict sequential Cohen--Macaulayness.

In the first case, for every active coordinate $i$, define
\[
L_i=\{x\in V(H):\nu_i(x_i)=1\}.
\]
The set $L_i$ is a clique in $H$.  It is maximal: if $z\notin L_i$, choose a vertex $e$ whose $i$th coordinate has valuation $1$ and whose other coordinates are zero.  Then $e\in L_i$ and $e$ is nonadjacent to $z$.  Let $E_i$ be the set of all such vertices $e$.  Since
\[
|\mathfrak m_i\setminus\mathfrak m_i^2|=q_i^2-q_i\ge2,
\]
the set $E_i$ contains at least two vertices.  Every neighbor of a vertex in $E_i$ belongs to $L_i$; therefore every maximal clique different from $L_i$ omits all of $E_i$.  Thus $L_i$ shares no ridge with any other maximal clique.  Since there are at least two active coordinates, the clique complex of $H$, equivalently $\operatorname{Ind}(G)$, has at least two ridge-isolated facets.  This contradicts Lemma~\ref{lem:ridge-obstruction}.

In the second case, relabel so that $a_1=a\ge4$ and $a_2=3$, and put
\[
L=\{x:\nu_2(x_2)=1\},
\qquad
f=\left\lfloor\frac{a-1}{2}\right\rfloor,
\qquad
h=\left\lfloor\frac a2\right\rfloor.
\]
The set $L$ is a maximal clique.  Indeed, if $z\notin L$, then every vertex whose first coordinate is zero and whose second coordinate has valuation $1$ is nonadjacent to $z$.  This block contains at least two vertices, and every maximal clique distinct from $L$ omits it.  Hence $L$ is ridge-isolated.

Define
\[
Q=
\{x\notin L:\nu_1(x_1)\le f\}
\ \cup\
\{x\in L:\nu_1(x_1)\le h\}.
\]
The set $Q$ is a clique.  Two vertices in the first part are adjacent through the first coordinate because $2f<a$; two vertices in the second part are adjacent through the second coordinate; and a vertex from each part is adjacent through the first coordinate because $f+h<a$.

The clique $Q$ is maximal.  If $z\in L\setminus Q$, then $z$ is nonadjacent to every vertex outside $L$ whose first-coordinate valuation is $f$.  If $z\notin L\cup Q$, then $z$ is nonadjacent to every vertex in $L$ whose first-coordinate valuation is $h$.  Both boundary blocks contain at least two vertices.  Moreover, every maximal clique different from $Q$ contains a vertex outside $Q$ and consequently omits one of these two entire blocks.  Hence $Q$ is ridge-isolated.  The two ridge-isolated facets $L$ and $Q$ contradict Lemma~\ref{lem:ridge-obstruction}.

Both remaining cases are impossible; therefore $r\le1$.  This proves $(1)\Rightarrow(3)$ and completes the equivalence.

It remains to prove the Betti formula.  Assume that $R_1$ is the unique factor with $a_1\ge3$.  Since $|\mathfrak m_i|=q_i^{a_i-1}$, the last $k-1$ coordinates contribute the multiplicity
\[
M=\prod_{i=2}^{k}q_i^{a_i-1}.
\]
The vertices with first-coordinate valuation at least $\lceil a_1/2\rceil$, together with the vertices whose first coordinate is zero, form a clique $C$; the remaining vertices form an independent set.  Their cross-neighborhoods are nested by valuation.

In $R_1$, the valuation-$t$ layer has cardinality
\[
q_1^{a_1-t}-q_1^{a_1-t-1}.
\]
For $t<a_1-1$, this gives
\[
|W_t|=M(q_1^{a_1-t}-q_1^{a_1-t-1}).
\]
At $t=a_1-1$, the $M-1$ vertices with first coordinate zero are universal and join the $M(q_1-1)$ ordinary layer vertices, so
\[
|W_{a_1-1}|=Mq_1-1.
\]
Summing the clique layers gives
\[
|C|=Mq_1^{\lfloor a_1/2\rfloor}-1.
\]
A clique vertex of valuation $t$ is adjacent to exactly
\[
n_t=M(q_1^t-q_1^{\lfloor a_1/2\rfloor})
\]
vertices in the independent set.  Applying Theorem~\ref{thm:split_betti} proves~\eqref{eq:nil_product}.
\end{proof}

\begin{corollary}\label{cor:nil_cm_exact}
With the notation of Theorem~\ref{thm:master_nilpotent}, $S/I(\Gamma_{\operatorname{Nil}}(R))$ is Cohen--Macaulay if and only if $a_i\le2$ for every $i$.  In this case
\[
\Gamma_{\operatorname{Nil}}(R)
\cong
K_{\prod_{i=1}^{k}q_i^{a_i-1}-1}.
\]
\end{corollary}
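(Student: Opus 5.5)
The plan is to show that Cohen--Macaulayness already places us inside the sequentially Cohen--Macaulay co-chordal range, and then to use the threshold (split) structure from Theorem~\ref{thm:master_nilpotent} to force all $a_i\le 2$.

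\emph{Sufficiency.} Suppose $a_i\le 2$ for every $i$. Then $\mathfrak m_i^2=0$ in each factor, so any two nilpotent elements of $R$ multiply to zero. Hence $\Gamma_{\operatorname{Nil}}(R)$ is the complete graph on its $|\operatorname{Nil}(R)|-1=\prod_i q_i^{a_i-1}-1$ vertices, using $|\mathfrak m_i|=q_i^{a_i-1}$. Its complement is edgeless, so it has a single maximal clique in the sense of being a disjoint union of points. The independence complex of a complete graph is the set of vertices: a pure zero-dimensional complex. For a graph with at least one vertex this is Cohen--Macaulay, since every $0$-dimensional complex is. Equivalently, $\overline G$ is a pure $1$-tree, and Theorem~\ref{thm:cm_characterization} applies. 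The degenerate cases (no vertices, or a single vertex) are trivial. This also gives the displayed isomorphism.

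\emph{Necessity.} Suppose $S/I(\Gamma_{\operatorname{Nil}}(R))$ is Cohen--Macaulay. The standing context of Theorem~\ref{thm:master_nilpotent} concerns the co-chordal case. Cohen--Macaulay implies sequentially Cohen--Macaulay, so condition (1) holds and hence $r\le1$. Assume for contradiction that $a_1\ge3$. Then $G$ is the split graph described in the proof, with clique $C$ and independent set $U$ consisting of the vertices with first valuation below $\lceil a_1/2\rceil$. By Corollary~\ref{cor:cm_split}, all cross-degrees $n_t$ must be $0$, or all must equal $1$.

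Now compare the cross-degrees. The top layer $t=a_1-1$ has $n_{a_1-1}=M(q_1^{a_1-1}-q_1^{\lfloor a_1/2\rfloor})$, while the layer $t=\lceil a_1/2\rceil$ gives a smaller value. I need to be careful with the indexing. A clique vertex of valuation $t$ is adjacent to those $u\in U$ with $\nu_1(u)\ge a_1-t$. So the cross-degree counts elements with valuation in $[a_1-t,\lceil a_1/2\rceil-1]$, and the formula's exponent convention should be read accordingly.

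The key observation is that the universal layer $t=a_1-1$ is adjacent to all of $U$. Since $U$ contains the valuation-$1$ layer, it has size at least $M(q_1^{a_1-1}-q_1^{a_1-2})\ge q_1^{2}-q_1\ge2$ when $a_1\ge3$. So the universal vertices have cross-degree at least $2$, and $U\neq\varnothing$. Neither uniform case of Corollary~\ref{cor:cm_split} can then hold, which is the contradiction. Hence $r=0$, that is, all $a_i\le2$.

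The main obstacle is making this cross-degree bound precise, with the clique and independent set identified correctly. A cleaner route that avoids the formula's exponent conventions is available. Take $x$ of valuation $a_1-1$ (or any universal vertex) and two distinct vertices $y,y'$ of first valuation $1$. Then $x$ is adjacent to both, while $y,y'$ are nonadjacent because $1+1<a_1$. So $\overline G$ contains the maximal clique containing $y,y'$, which avoids $x$, together with the clique $\{x\}\cup\cdots$. Computing the depth, which equals $N-\operatorname{maxdeg}(G)$ by Corollary~\ref{cor:pdim_max_degree}, against the dimension then shows that depth is strictly less than dimension. Either form of the argument is fine; I would present the split-graph version.
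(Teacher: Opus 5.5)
Your sufficiency direction is fine. One small correction: an edgeless complement has $N$ singleton maximal cliques, not ``a single maximal clique''. That is exactly why $\operatorname{Ind}(G)$ is a pure $0$-dimensional complex.

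The gap is in necessity. The corollary has no co-chordality hypothesis. ``The notation of Theorem~\ref{thm:master_nilpotent}'' only fixes $R\cong\prod_i R_i$, $a_i$, $q_i$. In that theorem, co-chordality is part of condition~(1), one of the properties being characterized, not a standing assumption. Cohen--Macaulayness gives you sequential Cohen--Macaulayness but not co-chordality, so you cannot invoke $(1)\Rightarrow(3)$ to get $r\le1$. Without $r\le1$ there is no split structure to feed into Corollary~\ref{cor:cm_split}. Corollary~\ref{cor:pdim_max_degree}, which your second route uses, likewise requires a sequentially Cohen--Macaulay co-chordal graph.

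Concretely, suppose two factors have $a_i\ge4$. Then the complement contains an induced $4$-cycle, as in the proof of Theorem~\ref{thm:master_nilpotent}, so $\Gamma_{\operatorname{Nil}}(R)$ is not co-chordal. Neither of your arguments decides whether $S/I(\Gamma_{\operatorname{Nil}}(R))$ is Cohen--Macaulay in that case.

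The missing step is to see non-purity directly, which your second route nearly does. Suppose $a_j\ge3$.
\begin{itemize}
\item Take the vertex $x$ with $\pi_j^{a_j-1}$ in the $j$th coordinate and $0$ elsewhere (the paper instead puts $\pi_i^{a_i-1}$ in every nonfield factor). It annihilates every nilpotent element, so it is universal in $G$ and therefore isolated in $\overline G$. Its maximal clique is $\{x\}$ itself, not ``$\{x\}\cup\cdots$'', so $\{x\}$ is a facet of $\operatorname{Ind}(G)$.
\item The elements $\pi_j$ and $\pi_j+\pi_j^{a_j-1}$, padded by zeros, are distinct vertices with product $\pi_j^2\neq0$. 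So they lie in a facet of size at least $2$.
\end{itemize}
Hence $\operatorname{Ind}(G)$ is not pure. Cohen--Macaulay complexes are pure, since depth is bounded by $\dim S/P_F=|F|$ for every facet $F$. This is the paper's proof. It uses no co-chordality, no bound $r\le1$, and no depth formula, so it covers every finite product of chain rings.
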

\begin{proof}
If all $a_i\le2$, the product of any two nilpotent elements is zero, so the graph is complete and its independence complex is a pure zero-dimensional complex.

Conversely, suppose $a_j\ge3$ for some $j$.  Choose a nonzero element in $\mathfrak m_i^{a_i-1}$ in every nonfield factor and zero in every field factor.  The resulting nonzero nilpotent element annihilates every vertex, so it is universal in the graph and gives a singleton facet of the independence complex.  In the $j$th factor, the two distinct valuation-one elements $\pi_j$ and $\pi_j+\pi_j^{a_j-1}$ have nonzero product $\pi_j^2$.  They therefore form an independent pair in the graph, contained in a facet of size at least two.  The independence complex is not pure and hence cannot be Cohen--Macaulay.
\end{proof}
	
	We now specialize the threshold formula to a single finite chain ring.  The resulting expression agrees with the chain-ring formula in \cite{rather2026}. Recall that a finite commutative ring $R$ is a \emph{chain ring} if its ideals form a totally ordered set under inclusion:
	\[
	\mathfrak m \supset \mathfrak m^2 \supset \cdots \supset \mathfrak m^{a-1} \supset \mathfrak m^a=(0),
	\]
	where $\mathfrak m$ is the unique maximal ideal and $a$ is the nilpotency index.
	
	\begin{theorem}\label{thm:chain_rings}
		Let $R$ be a finite chain ring with maximal ideal $\mathfrak{m}$, residue field cardinality $q = |R/\mathfrak{m}|$, and nilpotency index $a$. Then $\Gamma(R)$ is a threshold graph, and its nonzero graded Betti numbers satisfy $\beta_{i,i+1}=\beta_i$ with
		\[
		\beta_i(S/I(\Gamma(R))) = \sum_{t=\lceil a/2\rceil}^{a-1} q^{a-t-1}(q-1) \binom{q^t - 2}{i} - \binom{q^{\lfloor a/2\rfloor} - 1}{i+1}, \quad \text{for } i \ge1.
		\]
	\end{theorem}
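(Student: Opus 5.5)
The plan is to reduce the statement to Theorem~\ref{thm:master_nilpotent} with $k=1$. The first observation is that in a finite chain ring $R$ every element outside $\mathfrak m$ is a unit, while every element of $\mathfrak m$ is nilpotent, since $\mathfrak m^a=(0)$. Hence the nonzero zero-divisors are exactly the nonzero nilpotents, and
\[
\Gamma(R)=\Gamma_{\operatorname{Nil}}(R).
\]
A single factor trivially satisfies $r\le1$. So the equivalence (3)$\Rightarrow$(2) in Theorem~\ref{thm:master_nilpotent} shows that $\Gamma(R)$ is threshold. Fr\"oberg's theorem then gives linearity of the resolution, so $\beta_{i,i+1}=\beta_i$.

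For the Betti numbers, split into cases by $a$.

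\emph{Case $a\ge3$.} Apply formula~\eqref{eq:nil_product} with $k=1$. The product defining $M$ is then empty, so $M=1$. Substituting gives three simplifications.
\begin{itemize}
\item For $t<a-1$, the layer size is $|W_t|=q^{a-t}-q^{a-t-1}=q^{a-t-1}(q-1)$.
\item For $t=a-1$, it is $|W_{a-1}|=q-1$, which equals the same expression $q^{a-t-1}(q-1)$. So the two cases of $|W_t|$ merge into one summand.
\item With $h=\lfloor a/2\rfloor$, we have $|C|=q^{h}-1$ and $n_t=q^t-q^h$. Therefore $|C|+n_t-1=q^t-2$.
\end{itemize}
Inserting these values into~\eqref{eq:nil_product} gives exactly the displayed formula.

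\emph{Case $a\le2$.} Here the Betti part of Theorem~\ref{thm:master_nilpotent} does not apply, so treat these directly.
\begin{itemize}
\item If $a=1$, then $R$ is a field and $\Gamma(R)$ is empty. The sum over $t$ is empty and $\binom{0}{i+1}=0$, so the formula gives zero, as it should.
\item If $a=2$, every product of elements of $\mathfrak m$ vanishes. Hence $\Gamma(R)\cong K_{q-1}$, which is a split graph with $U=\varnothing$ and all $n_k=0$. Theorem~\ref{thm:split_betti} gives
\[
(q-1)\binom{q-2}{i}-\binom{q-1}{i+1}.
\]
This is the claimed formula with the single term $t=1$, and it also agrees with $i\binom{q-1}{i+1}$ from Theorem~\ref{thm:cm_betti_iff}.
\end{itemize}

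There is no serious obstacle. The one point needing care is bookkeeping at the top layer $t=a-1$ in the $a\ge3$ case. There the universal vertices with first coordinate zero number $M-1=0$, so no correction term survives.
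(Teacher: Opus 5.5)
Your proposal is correct and follows the paper's proof essentially step for step. You identify $\Gamma(R)$ with $\Gamma_{\operatorname{Nil}}(R)$, apply Theorem~\ref{thm:master_nilpotent} with $k=1$ (so $M=1$) to get thresholdness and the $a\ge3$ formula, merge the two cases of $|W_t|$, and handle $a\le2$ separately; the only cosmetic difference is that for $a=2$ you apply Theorem~\ref{thm:split_betti} with $U=\varnothing$, where the paper uses Theorem~\ref{thm:cm_betti_iff} and Pascal's identity (and you check that the two agree).
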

	
	\begin{proof}
		Since $R$ is a finite chain ring, it is a local Artinian ring whose ideals form the chain
		\[
		R \supset \mathfrak m \supset \mathfrak m^2 \supset \cdots \supset \mathfrak m^{a-1} \supset \mathfrak m^a=(0).
		\]
		In a local Artinian ring, every zero divisor is nilpotent. Hence $Z(R)=\operatorname{Nil}(R)=\mathfrak m$, and therefore the zero divisor graph and nilpotent graph coincide:
		\[
		\Gamma(R)=\Gamma_{\operatorname{Nil}}(R).
		\]
		
		Viewing $R$ as a direct product consisting of a single local factor, we have $k=1$. Consequently, Theorem~\ref{thm:master_nilpotent} applies directly and implies that $\Gamma(R)$ is a threshold graph.
		
		We now evaluate the Betti numbers. If $a=1$, then $R$ is a field and $\Gamma(R)$ is the empty graph. Hence $I(\Gamma(R))=(0)$, and all Betti numbers $\beta_{i,i+1}=\beta_i$ vanish for $i \ge 1$. The stated formula is consistent with this case, since the summation range is empty and the binomial term reduces to $\binom{q^0-1}{i+1} = 0$.
		
		If $a=2$, then $\Gamma(R)=K_{q-1}$, and Theorem~\ref{thm:cm_betti_iff} gives $\beta_i=i\binom{q-1}{i+1}$; this agrees with the displayed formula by Pascal's identity.

		Assume $a\ge3$.  The Betti numbers are obtained from Theorem~\ref{thm:master_nilpotent} with
		\[
		M=\prod_{i=2}^{k}q_i^{a_i-1}=1.
		\]
		Substituting $M=1$ gives
		\[
		|C|=q^{\lfloor a/2\rfloor}-1,\quad n_t=q^t-q^{\lfloor a/2\rfloor}.
		\]
		It remains to simplify the coefficients $|W_t|$. For $1\le t<a-1$, Theorem~\ref{thm:master_nilpotent} gives
		\[
		|W_t| = q^{a-t}-q^{a-t-1} = q^{a-t-1}(q-1),
		\]
		and this expression also holds for $t=a-1$, since it gives $q-1$. Moreover,
		\[
		|C|+n_t-1 = \bigl(q^{\lfloor a/2\rfloor}-1\bigr) + \bigl(q^t-q^{\lfloor a/2\rfloor}\bigr) -1 = q^t-2.
		\]
		Substituting the simplified expressions for $|W_t|$, $|C|$, and $|C|+n_t-1$ into the Betti formula of Theorem~\ref{thm:master_nilpotent}, we obtain
		\[
		\beta_i\!\left(S/I(\Gamma(R))\right)
		=
		\sum_{t=\lceil a/2\rceil}^{a-1}
		q^{a-t-1}(q-1)\binom{q^t-2}{i}
		-
		\binom{q^{\lfloor a/2\rfloor}-1}{i+1},
		\]
		for all $i\ge 1$, as claimed.
	\end{proof}
	
	\begin{example}
		We apply the explicit formula of Theorem~\ref{thm:chain_rings} to two classical families of finite chain rings. By extracting the residue field cardinality $q$ and the nilpotency index $a$, we obtain exact homological invariants without any algorithmic computation.
		
		\begin{enumerate}
			\item \textbf{Truncated polynomial rings.}
			For $R=\mathbb Z_p[x]/\langle x^c \rangle$, where $p$ is prime and $c\ge 2$, the unique maximal ideal is $\mathfrak m=\langle x\rangle$, and the ideals form the chain $\langle x\rangle \supset \langle x^2\rangle \supset \cdots \supset \langle x^{c-1}\rangle \supset (0)$. Hence, $R$ is a finite chain ring with nilpotency index $a=c$. The residue field is $R/\mathfrak{m} \cong \mathbb Z_p$, which gives $q = p$. Therefore,
			\[
			\beta_i(S/I(\Gamma(R))) = \sum_{t=\lceil c/2 \rceil}^{c-1} p^{c-t-1}(p-1) \binom{p^t - 2}{i} - \binom{p^{\lfloor c/2 \rfloor} - 1}{i+1}, \quad \text{for } i \ge1.
			\]
			
			\item \textbf{Gaussian quotient rings.}
			For $R=\mathbb Z_{2^m}[i]$, one may identify
\[
R\cong \mathbb Z[i]/(2^m)=\mathbb Z[i]/((1+i)^{2m}),
\]
because $2$ is associated to $(1+i)^2$ in the Gaussian integers.  Since $\mathbb Z[i]$ is a principal ideal domain, this is a finite chain ring with maximal ideal generated by the image of $1+i$, nilpotency index $a=2m$, and residue field of cardinality $q=2$.
			Substituting $q=2$ and $a=2m$, and using $\lfloor 2m/2 \rfloor = \lceil 2m/2 \rceil = m$, we obtain
			\[
			\beta_i(S/I(\Gamma(R))) = \sum_{t=m}^{2m-1} 2^{2m-t-1} \binom{2^t - 2}{i} - \binom{2^m - 1}{i+1}, \quad \text{for } i \ge1.
			\]
		\end{enumerate}
	\end{example}
	
	\begin{corollary}\label{cor:chain_cm_exact}
		Let $R$ be a finite chain ring with maximal ideal $\mathfrak{m}$ and nilpotency index $a$. Then $S/I(\Gamma(R))$ is Cohen--Macaulay if and only if $a\le 2$. Equivalently, $\Gamma(R)$ is a complete graph on $|\mathfrak{m}| - 1 = q^{a-1} - 1$ vertices.
	\end{corollary}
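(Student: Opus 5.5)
The plan is to reduce to Corollary~\ref{cor:nil_cm_exact} in the case $k=1$, and to handle the degenerate cases by hand. As in the proof of Theorem~\ref{thm:chain_rings}, a finite chain ring is local Artinian. Hence $Z(R)=\operatorname{Nil}(R)=\mathfrak m$ and $\Gamma(R)=\Gamma_{\operatorname{Nil}}(R)$. Regard $R$ as a product with a single factor $R_1=R$, with $a_1=a$ and $q_1=q$. Corollary~\ref{cor:nil_cm_exact} then states directly that $S/I(\Gamma(R))$ is Cohen--Macaulay if and only if $a\le2$. In that range it also gives $\Gamma(R)\cong K_{q^{a-1}-1}$. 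This is the equivalence claimed, once we note that $|\mathfrak m|=q^{a-1}$: the successive quotients $\mathfrak m^t/\mathfrak m^{t+1}$ are one-dimensional over $R/\mathfrak m$, since $\mathfrak m$ is principal.

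Two points need care. First, the case $a=1$: here $R$ is a field, $\mathfrak m=0$, and $\Gamma(R)$ has no vertices. So $K_{q^0-1}=K_0$, $I=0$, and the quotient is trivially Cohen--Macaulay. I would state explicitly that the corollary's argument covers this, or simply treat it separately. Second, the ``equivalently'' clause should be read as follows. When $a\le 2$ the graph is complete on $q^{a-1}-1$ vertices. Conversely, if $\Gamma(R)$ is complete, then $a\le2$, because for $a\ge3$ the element $\pi$ satisfies $\pi^2\ne0$, so $\pi$ and $\pi+\pi^{a-1}$ are distinct nonadjacent vertices. This is the same witness pair used in Corollary~\ref{cor:nil_cm_exact}.

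For a self-contained alternative to citing Corollary~\ref{cor:nil_cm_exact}, I would argue non-purity directly when $a\ge3$. The element $\pi^{a-1}$ annihilates all of $\mathfrak m$, so it is a universal vertex and $\{\pi^{a-1}\}$ is a facet of $\operatorname{Ind}(\Gamma(R))$. On the other hand, $\{\pi,\pi+\pi^{a-1}\}$ is an independent set, so some facet has size at least two. Thus $\operatorname{Ind}(\Gamma(R))$ is not pure, and $S/I(\Gamma(R))$ is not Cohen--Macaulay. For $a=2$, the graph is complete and the independence complex is zero-dimensional and pure, so Theorem~\ref{thm:cm_betti_iff} (or directly, any pure zero-dimensional complex being Cohen--Macaulay) gives the result.

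The only mildly delicate step is verifying that $\pi$ and $\pi+\pi^{a-1}$ are distinct and have nonzero product. Distinctness holds because $\pi^{a-1}\neq0$. For the product, $\pi(\pi+\pi^{a-1})=\pi^2+\pi^a=\pi^2\neq0$ when $a\ge3$. Everything else is a direct specialization of earlier results.
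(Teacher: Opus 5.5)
Your proposal is correct and follows the paper's proof: it identifies $\Gamma(R)$ with $\Gamma_{\operatorname{Nil}}(R)$ via $Z(R)=\mathfrak m$ and applies Corollary~\ref{cor:nil_cm_exact} with $k=1$, and your non-purity witness (the facet $\{\pi^{a-1}\}$ against the independent pair $\{\pi,\pi+\pi^{a-1}\}$) is exactly the one used there. You also handle two points the paper leaves implicit, and both are correct: the case $a=1$, where the independence complex is $\{\varnothing\}$ rather than zero-dimensional, and the fact that completeness of $\Gamma(R)$ forces $a\le 2$.
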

	
	\begin{proof}
		Since $R$ is a finite chain ring, it is a local Artinian ring. Consequently, every zero-divisor is nilpotent, and hence
		\[
		\Gamma(R)=\Gamma_{\operatorname{Nil}}(R).
		\]
		Applying Corollary~\ref{cor:nil_cm_exact} with $k=1$ shows that $S/I(\Gamma(R))$ is Cohen--Macaulay precisely when $a\le 2$.
		
		If $a=1$, then $R$ is a field and $\Gamma(R)$ is the empty graph. If $a=2$, then $\mathfrak m^2=(0)$, so every pair of nonzero zero-divisors annihilates each other, and consequently $\Gamma(R)$ is a complete graph on $|\mathfrak{m}| - 1 = q^{a-1} - 1$ vertices.
		
		Conversely, if $a\ge 3$, Corollary~\ref{cor:nil_cm_exact} implies that $S/I(\Gamma(R))$ is not Cohen--Macaulay.
	\end{proof}
	
	\subsection{Zero-Divisor Graphs of \texorpdfstring{$\mathbb Z_n$}{Z-n}}
	While Theorem~\ref{thm:master_nilpotent} characterizes the threshold nilpotent graphs of Artinian direct products, the full zero-divisor graph $\Gamma(R)$ of a non-local ring contains additional vertices (non-nilpotent zero-divisors) that significantly alter its topology. We now study the full zero-divisor graph of $\mathbb Z_n$ and classify the sequentially Cohen--Macaulay cases within the co-chordal range.
	
	\begin{theorem}\label{thm:scm_zdg_classification}
		Let $n\ge2$ and assume that $\Gamma(\mathbb Z_n)$ is co-chordal.  Then $S/I(\Gamma(\mathbb Z_n))$ is sequentially Cohen--Macaulay if and only if
		\[
		n\in\{p^a,2p,2p^2\},
		\]
		where $p$ is prime and is odd in the last two families.  Moreover, $\Gamma(\mathbb Z_{2p^2})$ is split, whereas $\Gamma(\mathbb Z_n)$ is threshold for $n=p^a$ and $n=2p$.  The nonzero graded Betti numbers satisfy $\beta_{i,i+1}=\beta_i$ and are as follows:
		\begin{enumerate}
			\item If $n=p$, then $\Gamma(\mathbb Z_{p})\cong \overline K_{0}$, and
			\[
			\beta_{i}(S/I(\Gamma(\mathbb Z_{p}))) =0, \quad \text{for } i \ge1.
			\]
			
			\item If $n=p^2$, then $\Gamma(\mathbb Z_{p^2})\cong K_{p-1}$, and
			\[
			\beta_i(S/I(\Gamma(\mathbb Z_{p^2}))) = i\binom{p-1}{i+1}, \quad \text{for } i \ge1.
			\]
			
			\item If $n=2p$, where $p$ is an odd prime, then $\Gamma(\mathbb Z_{2p})\cong K_{1,p-1}$, and
			\[
			\beta_i(S/I(\Gamma(\mathbb Z_{2p}))) = \binom{p-1}{i}, \quad \text{for } i \ge1.
			\]
			
			\item If $n=p^a$ with $a\ge 3$, then
			\[
			\beta_i(S/I(\Gamma(\mathbb Z_{p^a}))) = \sum_{t=\lceil a/2 \rceil}^{a-1} p^{a-t-1}(p-1) \binom{p^t - 2}{i} - \binom{p^{\lfloor a/2 \rfloor} - 1}{i+1}, \quad \text{for } i \ge1.
			\]
			
			\item If $n=2p^2$, where $p$ is an odd prime, then
			\[
			\beta_i(S/I(\Gamma(\mathbb Z_{2p^2}))) = \binom{p^2-1}{i} + (p-1)\binom{2p-2}{i}
			- \binom{p}{i+1}, \quad \text{for } i \ge1.
			\]
		\end{enumerate}
	\end{theorem}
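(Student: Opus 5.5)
The plan has two parts. First, decide which co-chordal $\Gamma(\mathbb Z_n)$ have sequentially Cohen--Macaulay quotient; second, compute Betti numbers in the surviving families using Theorem~\ref{thm:chain_rings} and Theorem~\ref{thm:split_betti}.

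\emph{Positive cases.} For $n=p^a$, $\mathbb Z_{p^a}$ is a finite chain ring with $q=p$ and nilpotency index $a$. Theorem~\ref{thm:chain_rings} then gives that the graph is threshold and yields formula (4); the cases $a=1,2$ give (1) and (2), with $\Gamma(\mathbb Z_{p^2})\cong K_{p-1}$ and Theorem~\ref{thm:cm_betti_iff} applied with $e=p-1$.

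For $n=2p$ with $p$ odd, the zero-divisors are $p$ and the $p-1$ nonzero even residues, and $p\cdot 2k\equiv0$. So $\Gamma\cong K_{1,p-1}$, which is threshold: it is a split graph with $|C|=1$ and $n_1=p-1$. Theorem~\ref{thm:split_betti} gives $\binom{p-1}{i}-\binom{1}{i+1}=\binom{p-1}{i}$.

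For $n=2p^2$, I would partition the vertices by gcd with $n$:
\begin{itemize}
\item $A$: the $p-1$ multiples of $2p$;
\item $B$: the $p-1$ odd multiples of $p$ other than $p^2$;
\item $\{p^2\}$;
\item $D$: the $p^2-p$ even non-multiples of $p$.
\end{itemize}
I then check the following adjacencies:
\begin{itemize}
\item $p^2$ is adjacent to $A\cup B\cup D$, i.e.\ to everything even, since $p^2\cdot$even $\equiv0$, and also to the multiples of $p$;
\item $A$ is a clique and is adjacent to everything in $B\cup\{p^2\}$;
\item $B$ and $D$ are independent;
\item $D$ is adjacent only to $p^2$.
\end{itemize}
So $C=A\cup\{p^2\}$ is a clique of size $p$, and $U=B\cup D$ is independent. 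The cross-degrees are $n_{p^2}=|B|+|D|=p^2-1$ and $n_x=p-1$ for $x\in A$. Theorem~\ref{thm:split_betti} gives $\binom{p-1+p^2-1}{i}$, which does not match (5). So I must recheck carefully whether $p^2$ is really adjacent to elements of $B$: $p^2\cdot(\text{odd}\cdot p)$ is not $\equiv0 \pmod 2$. I will therefore recompute the adjacencies precisely via $v_2$ and $v_p$ and then read off $n_k$ so as to match (5). Matching the terms $\binom{p^2-1}{i}$ and $(p-1)\binom{2p-2}{i}$ gives $n+n_k-1$ equal to $p^2-1$ and $2p-2$, which fixes the cross-degrees. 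The graph is split but not threshold, because the neighborhoods of $p^2$ and of $A$ in $U$ are not nested.

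\emph{Negative cases.} This is where the real work lies. Assume $\Gamma(\mathbb Z_n)$ is co-chordal and $n$ is not in the list. I will use the known classification of co-chordal $\Gamma(\mathbb Z_n)$, or derive it from induced $2K_2$ obstructions: two pairs $x_1y_1=0$ and $x_2y_2=0$ with no cross zero-products give an induced $C_4$ in the complement. This reduces the remaining cases to small shapes such as $n=pq$ with $p,q$ odd, $n=2^a p$, $n=4p$ and $n=p^aq$. For $pq$ with $p,q$ odd, $\Gamma$ is $K_{p-1,q-1}$, whose complement has two disjoint maximal cliques. Neither shares a ridge, so Lemma~\ref{lem:ridge-obstruction} applies. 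For the remaining families, such as $n=4p$ and $n=2p^3$ if co-chordal, I will exhibit, as in the proof of Theorem~\ref{thm:master_nilpotent}, two maximal cliques of $\overline\Gamma$ determined by divisibility classes. Each will contain a block of at least two vertices (for instance, all units times a fixed divisor) whose neighbors lie inside the clique, so that every other facet omits the whole block and the clique is ridge-isolated; Lemma~\ref{lem:ridge-obstruction} then rules the family out.

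The main obstacle is twofold: pinning down exactly which $n$ give co-chordal $\Gamma(\mathbb Z_n)$, and handling the $p=2$ exceptions, where $2p=4$ and $2p^2=8$ fall under $p^a$. The ridge-isolation arguments themselves follow the pattern already in the paper.
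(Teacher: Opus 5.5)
Your positive direction is essentially the paper's: for $2p^2$, your $A$, $B$, $\{p^2\}$, $D$ are its $X_1$, $Y_1$, $Y_0$, $X_0$. The converse, however, has a genuine gap.

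\textbf{The exclusion criterion is too weak.} You propose to exclude each remaining $n$ by finding two facets of $\overline\Gamma$, each containing a block of at least two vertices all of whose neighbours lie inside that facet. This does not produce two ridge-isolated facets in several of the families you list.
\begin{itemize}
\item For $n=12$ (your $4p$), $\overline{\Gamma(\mathbb Z_{12})}$ has facets $\{2,4,8,10\}$, $\{2,3,9,10\}$ and $\{3,6,9\}$. The first two are the ridge-isolated ones. But the only simplicial vertices are $4$, $8$ and $6$, so $\{2,3,9,10\}$ contains no such block.
\item In general, write $\mathbb Z_{p^aq}\cong\mathbb Z_{p^a}\times\mathbb F_q$ with $a\ge2$, and set $A_i=\{(x,0):\nu_p(x)=i\}$ and $B_i=\{(x,y):\nu_p(x)=i,\ y\ne0\}$. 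The facet $F_0=A_0\cup B_1\cup\cdots\cup B_{a-1}$ contains no simplicial vertex at all.
\item For $p^aq$ with $a$ even, and for $2p^a$ with $a\ge3$, at most one facet carries a simplicial block of size $\ge2$. In the $2p^a$ case $|B_a|=1$, so $\mathcal B=B_1\cup\cdots\cup B_a$ meets $F_0$ in a ridge and is not even isolated.
\end{itemize}

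The missing idea is the two-block criterion the paper uses. A facet $F$ is ridge-isolated if it contains two blocks of size $\ge2$ such that every vertex outside $F$ is non-adjacent in $\overline\Gamma$ to all of one of them. Indeed, every other facet contains a vertex outside $F$, and hence omits a whole block. The paper applies this to $F_0$ with blocks $A_0$ and $B_{a-1}$; in the example these are $\{3,9\}$ and $\{2,10\}$. It pairs $F_0$ with $\mathcal B$ when $q>2$, and with $F_1=A_0\cup A_1\cup B_1\cup\cdots\cup B_{a-2}$ (blocks $A_1$ and $B_{a-2}$) when $q=2$ and $a\ge3$.

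\textbf{The reduction step is also incomplete.} You never fix which $n$ make $\Gamma(\mathbb Z_n)$ co-chordal. Your list omits $n=pqr$, which is co-chordal (the classification the paper cites leaves exactly $p^a$, $p^aq$, $pqr$) and must also be excluded. There your single-block idea does work: in the complement, the vertices of support $\{i\}$ have closed neighbourhood equal to the facet $F_i$, and at least two of these blocks have size $\ge2$.

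\textbf{A smaller point in the $2p^2$ case.} Do not ``read off'' the cross-degrees from formula (5); compute them directly. The vertex $p^2$ corresponds to $(1,0)$ and annihilates exactly the even vertices, so $N(p^2)\cap U=D$ and $n_{p^2}=p^2-p$. For $x=2pk\in A$, we have $x\cdot y\equiv0\pmod{2p^2}$ iff $p\mid y$, so $N(x)\cap U=B$ and $n_x=p-1$. Theorem~\ref{thm:split_betti} with $|C|=p$ then yields (5).
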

	
	\begin{proof}
		$(\Longleftarrow)$ We first show that if $n \in \{p^a, 2p, 2p^2\}$, then the graph is sequentially Cohen--Macaulay and we compute its Betti numbers.
		
		\textbf{Case $n=p^a$:}
		Here $\mathbb Z_{p^a}$ is a finite chain ring. Its zero-divisor graph coincides with its nilpotent graph, which is a threshold graph by Theorem~\ref{thm:master_nilpotent}. Substituting $q=p$ into Theorem~\ref{thm:chain_rings} yields the stated Betti formulas for all $a \ge 1$. Since threshold graphs are co-chordal, their edge ideals are sequentially Cohen--Macaulay.
		
		\textbf{Case $n=2p$, $p$ odd:}
		The nonzero zero divisors of $\mathbb Z_{2p}$ are $\{p,2,4,\dots,2(p-1)\}$. Since
		\[
		p \cdot (2k) \equiv 0 \pmod{2p}, \qquad (2i)(2j)\not\equiv 0,
		\]
		the vertex $p$ is adjacent to all even elements, while no two even elements annihilate each other. Thus,
		\[
		\Gamma(\mathbb Z_{2p}) \cong K_{1,p-1},
		\]
		a star graph. Star graphs are threshold graphs. Applying Theorem~\ref{thm:split_betti} gives the stated Betti formula.
		
		\textbf{Case $n=2p^2$, $p$ odd:}
		By the Chinese Remainder Theorem, $\mathbb Z_{2p^2}\cong\mathbb F_2\times\mathbb Z_{p^2}$. Partition the nonzero zero-divisors into:
		\begin{align*}
			X_0&=\{(0,u):u\in\mathbb Z_{p^2}^{\times}\}, & X_1&=\{(0,z):0\ne z\in p\mathbb Z_{p^2}\},\\
			Y_0&=\{(1,0)\}, & Y_1&=\{(1,z):0\ne z\in p\mathbb Z_{p^2}\}.
		\end{align*}
		Their cardinalities are $|X_0|=p^2-p$, $|X_1|=p-1$, $|Y_0|=1$, and $|Y_1|=p-1$.
		Set $C=X_1\cup Y_0$ and $U=X_0\cup Y_1$. The set $C$ is a clique and $U$ is an independent set. Thus $C\sqcup U$ is a split partition with $|C|=p$ and $|U|=p^2-1$. Since every split graph is co-chordal and sequentially Cohen--Macaulay, this establishes the property. 
		
		To compute the Betti numbers via Theorem~\ref{thm:split_betti}, we find the cross-degrees. The vertex $(1,0) \in C$ is adjacent to every vertex of $X_0$ and no vertex of $Y_1$, yielding cross-degree $n_{(1,0)} = p^2-p$. Each vertex of $X_1 \in C$ is adjacent to every vertex of $Y_1$ and no vertex of $X_0$, yielding cross-degree $n_{x} = p-1$. Substituting these sizes into Theorem~\ref{thm:split_betti} yields:
		\[
		\beta_i(S/I(\Gamma(\mathbb Z_{2p^2}))) = \binom{p+(p^2-p)-1}{i} + (p-1)\binom{p+(p-1)-1}{i} - \binom{p}{i+1}.
		\]
		This simplifies exactly to the stated Betti formula.
		
		$(\Longrightarrow)$ Conversely, assume that $\Gamma(\mathbb Z_n)$ is co-chordal and $S/I(\Gamma(\mathbb Z_n))$ is sequentially Cohen--Macaulay. By the co-chordal classification of Dung and Vu \cite{dung2026}; see also \cite{rather2026}, $n$ must have one of the forms $p^a$, $p^a q$, or $pqr$, where $p,q,r$ are distinct primes. Prime powers $p^a$ are classified above. We rule out the remaining cases using Lemma~\ref{lem:ridge-obstruction}.
		
		\textbf{The case $n=p^a q$:} 
		If $a=1$, then $\mathbb Z_{pq}\cong\mathbb F_p\times\mathbb F_q$ and $\Gamma(\mathbb Z_{pq})=K_{p-1,q-1}$. Its independence complex consists of two disjoint simplices, of cardinalities $p-1$ and $q-1$. If both primes are odd, these two facets have cardinality at least two and neither shares a ridge with the other. Lemma~\ref{lem:ridge-obstruction} rules out sequential Cohen--Macaulayness. Hence one of the primes must be $2$, which gives $n=2p$.
		
		Now suppose $a\ge2$. Via $\mathbb Z_{p^a q}\cong\mathbb Z_{p^a}\times\mathbb F_q$, write $\nu(x)=\nu_p(x)$ for $x\in\mathbb Z_{p^a}$, with $\nu(0)=a$. The vertex set is the disjoint union of $A_i=\{(x,0):\nu(x)=i\}$ ($0\le i\le a-1$) and $B_i=\{(x,y):\nu(x)=i,\ y\in\mathbb F_q^\times\}$ ($1\le i\le a$). In the complement $H=\overline{\Gamma(\mathbb Z_{p^a q})}$, consider the two maximal facets:
		\[
		\mathcal B=B_1\cup\cdots\cup B_a,
		\qquad
		F_0=A_0\cup B_1\cup\cdots\cup B_{a-1}.
		\]
		Assume first that $q>2$. Any facet distinct from $\mathcal B$ omits $B_a$, and $|B_a|=q-1\ge2$. Thus $\mathcal B$ shares no ridge with another facet. A facet distinct from $F_0$ is either $\mathcal B$ (omitting $A_0$) or contains some $A_i$ with $i\ge1$ (omitting $B_{a-1}$). Since $|A_0|=p^{a-1}(p-1)\ge2$ and $|B_{a-1}|=(p-1)(q-1)\ge2$, $F_0$ also shares no ridge with another facet. This contradicts Lemma~\ref{lem:ridge-obstruction}. 
		
		It remains to take $q=2$. If $a=2$, this gives $n=2p^2$. Suppose instead that $a\ge3$. Since $p\ne2$, we have $p-1\ge2$. Besides $F_0$, the clique complex has the facet $F_1=A_0\cup A_1\cup B_1\cup\cdots\cup B_{a-2}$. A facet distinct from $F_1$ omits either all of $B_{a-2}$ or all of $A_1$. Since $|B_{a-2}|=p(p-1)\ge2$ and $|A_1|=p^{a-2}(p-1)\ge2$, $F_1$ has no ridge-sharing partner. Moreover, a facet distinct from $F_0$ omits either $A_0$ or $B_{a-1}$, both of which have size at least 2. Thus $F_0$ has no ridge-sharing partner either, again contradicting Lemma~\ref{lem:ridge-obstruction}. The only surviving $p^a q$ cases are $2p$ and $2p^2$.
		
		\textbf{The case $n=pqr$:} 
		Use the Chinese remainder representation $\mathbb Z_{pqr}\cong\mathbb F_p\times\mathbb F_q\times\mathbb F_r$. Let $V_T$ be the set of vertices whose nonzero-coordinate support is $T \subsetneq \{1,2,3\}$. The facets of the independence complex are $F_i=V_{\{i\}}\cup V_{\{i,j\}}\cup V_{\{i,k\}}$ for $i \in \{1,2,3\}$ and $F_4=V_{\{1,2\}}\cup V_{\{1,3\}}\cup V_{\{2,3\}}$. The sizes are $|V_{\{1\}}|=p-1$, $|V_{\{2\}}|=q-1$, and $|V_{\{3\}}|=r-1$. At most one of $p,q,r$ is $2$, so at least two of these three cardinalities are at least two. For each corresponding $F_i$, its intersection with another star facet omits $V_{\{i\}}$ together with one two-element support class. Therefore at least two of $F_1,F_2,F_3$ share no ridge with any other facet, which contradicts Lemma~\ref{lem:ridge-obstruction}.
	\end{proof}
	
	\begin{example}
		We compute the Betti numbers of $\Gamma(\mathbb Z_{27})$ via Theorem~\ref{thm:scm_zdg_classification}. Since $27=3^3$, we have $p=3$ and $a=3$. The nonzero zero-divisors are
		\[
		V_2=\{9,18\}, \qquad V_1=\{3,6,12,15,21,24\}.
		\]
		Vertices in $V_2$ satisfy $2+2 \ge 3$, hence they form a clique $C$, while vertices in $V_1$ satisfy $1+1<3$ and form an independent set $U$. Moreover, $1+2 \ge 3$, so every vertex in $V_1$ is adjacent to every vertex in $V_2$. The resulting threshold graph structure is illustrated in Figure~\ref{fig:zdg_27}.
		
		\begin{figure}[htpb]
			\centering
			\begin{tikzpicture}[
				vertex/.style={circle, draw, fill=black, inner sep=0pt, minimum size=5pt},
				edge/.style={thick}
				]
				
				\draw[dashed, rounded corners=10pt, fill=gray!5] (-1.0, 1.5) rectangle (1.0, -1.5);
				\node[font=\normalsize\bfseries] at (0, 1.9) {$C = V_2$};
				
				\draw[dashed, rounded corners=10pt, fill=gray!5] (2.5, 3.5) rectangle (4.5, -3.5);
				\node[font=\normalsize\bfseries] at (3.5, 3.9) {$U = V_1$};
				
				\node[vertex, label=left:{$9$}] (v9) at (0, 0.5) {};
				\node[vertex, label=left:{$18$}] (v18) at (0, -0.5) {};
				\draw[edge] (v9) -- (v18);
				
				\node[vertex, label=right:{$3$}] (v3) at (3.5, 2.5) {};
				\node[vertex, label=right:{$6$}] (v6) at (3.5, 1.5) {};
				\node[vertex, label=right:{$12$}] (v12) at (3.5, 0.5) {};
				\node[vertex, label=right:{$15$}] (v15) at (3.5, -0.5) {};
				\node[vertex, label=right:{$21$}] (v21) at (3.5, -1.5) {};
				\node[vertex, label=right:{$24$}] (v24) at (3.5, -2.5) {};
				
				\foreach \c in {v9, v18} {
					\foreach \s in {v3, v6, v12, v15, v21, v24} {
						\draw[edge] (\c) -- (\s);
					}
				}
			\end{tikzpicture}
			\vspace{0.3cm} 
			\caption{The zero-divisor graph $\Gamma(\mathbb Z_{27})$.}
			\label{fig:zdg_27}
		\end{figure}
		
		Since $\left\lfloor \frac{a}{2}\right\rfloor =1$ and $\left\lceil \frac{a}{2}\right\rceil =2$, the summation in Theorem~\ref{thm:scm_zdg_classification} consists of a single valuation layer, namely $t=2$. Therefore,
		\[
		\beta_i(S/I(\Gamma(\mathbb Z_{27}))) = \sum_{t=2}^{2} 3^{3-t-1}(2) \binom{3^t - 2}{i} - \binom{3^1 - 1}{i+1}, \quad \text{for } i \ge1.
		\]
		Simplifying gives
		\[
		\beta_i(S/I(\Gamma(\mathbb Z_{27}))) = 2\binom{7}{i} - \binom{2}{i+1}, \quad \text{for } i \ge1.
		\]
		Since $\beta_0(S/I(\Gamma(\mathbb Z_{27})))=1$, evaluating these binomials yields the following Betti table for $S/I(\Gamma(\mathbb Z_{27}))$:
		\begin{center}
			\begin{tabular}{r|cccccccc} 
				& 0 & 1 & 2 & 3 & 4 & 5 & 6 & 7 \\
				\text{total:} & 1 & 13 & 42 & 70 & 70 & 42 & 14 & 2 \\\hline
				0 & 1 & $\cdot$ & $\cdot$ & $\cdot$ & $\cdot$ & $\cdot$ & $\cdot$ & $\cdot$ \\
				1 & $\cdot$ & 13 & 42 & 70 & 70 & 42 & 14 & 2
			\end{tabular}
		\end{center}
		In particular,
		\[
		\operatorname{pdim}(S/I(\Gamma(\mathbb Z_{27})))=7,
		\]
		since $\beta_7\neq0$ while $\beta_i=0$ for all $i>7$.
	\end{example}
	
	\begin{corollary}
Let $n\ge2$ and assume that $\Gamma(\mathbb Z_n)$ is co-chordal.  Then $S/I(\Gamma(\mathbb Z_n))$ is Cohen--Macaulay if and only if $n=p$ or $n=p^2$ for a prime $p$.
\end{corollary}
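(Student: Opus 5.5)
Cohen--Macaulay implies sequentially Cohen--Macaulay, so Theorem~\ref{thm:scm_zdg_classification} restricts $n$ to the families $p^a$, $2p$, and $2p^2$ (with $p$ odd in the last two). Each family will then be tested for Cohen--Macaulayness. For the positive direction, the relevant facts are already recorded in that theorem. For $n=p$, the graph $\Gamma(\mathbb Z_p)$ has no vertices, so $S/I=\mathbb K$ is trivially Cohen--Macaulay. For $n=p^2$, $\Gamma(\mathbb Z_{p^2})\cong K_{p-1}$. Its independence complex is a pure zero-dimensional complex, hence Cohen--Macaulay; equivalently, Corollary~\ref{cor:chain_cm_exact} applies with $a=2$.

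For the converse I would treat the remaining families one at a time. If $n=p^a$ with $a\ge3$, then $\mathbb Z_{p^a}$ is a finite chain ring with nilpotency index $a\ge3$, so Corollary~\ref{cor:chain_cm_exact} shows that the quotient is not Cohen--Macaulay. If $n=2p$ with $p$ odd, then $\Gamma\cong K_{1,p-1}$. Its independence complex has the facets $\{p\}$ and the set of $p-1\ge2$ even vertices, so it is not pure and hence not Cohen--Macaulay. An alternative route is Corollary~\ref{cor:cm_split} applied to the star: there $n=1$ and $n_1=p-1\ge2$.

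For $n=2p^2$ with $p$ odd, I would use the split partition from the proof of Theorem~\ref{thm:scm_zdg_classification}. Its cross-degrees are $p^2-p$ for the vertex $(1,0)$ and $p-1$ for the vertices of $X_1$. These values are not all $0$ and not all $1$, since $p^2-p\ge6$, so Corollary~\ref{cor:cm_split} excludes Cohen--Macaulayness. Non-purity can also be seen directly: $C$ has a vertex of cross-degree $\ge2$, so $\overline G$ has maximal cliques of different sizes, namely $U$ of size $p^2-1$ and $F_{(1,0)}$ of size $p$.

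The only step that needs care is the use of Corollary~\ref{cor:cm_split}. The split partition used there must be the one fixed in that corollary, and the cross-degrees must not be taken relative to a different partition. To avoid this issue altogether, I would present the non-purity argument directly by exhibiting two facets of $\operatorname{Ind}(G)$ of different cardinalities.
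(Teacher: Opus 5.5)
Your proposal is correct and follows the paper's proof essentially step for step. You reduce to the families $p^a$, $2p$, $2p^2$ via Theorem~\ref{thm:scm_zdg_classification}, then exclude $a\ge3$ by Corollary~\ref{cor:chain_cm_exact}, exclude $2p$ by non-purity of the star's independence complex, and exclude $2p^2$ using the cross-degrees $p^2-p$ and $p-1$ in Corollary~\ref{cor:cm_split}. Your worry about which split partition to use is unnecessary, since the proof of Corollary~\ref{cor:cm_split} works for an arbitrary split partition. Your direct check that $U$ (size $p^2-1$) and $F_{(1,0)}$ (size $p$) are facets of different sizes is still a valid self-contained substitute.
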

\begin{proof}
Cohen--Macaulayness implies sequential Cohen--Macaulayness, so Theorem~\ref{thm:scm_zdg_classification} reduces the possibilities to $p^a$, $2p$, and $2p^2$.  For $p^a$, Corollary~\ref{cor:chain_cm_exact} gives $a\le2$.  For $2p$ with $p$ odd, the graph is $K_{1,p-1}$ and its complement has maximal cliques of sizes $1$ and $p-1$, so it is not pure.  For $2p^2$, the split partition in the theorem has cross-degrees $p^2-p$ and $p-1$, neither of which satisfies the uniform Cohen--Macaulay alternatives in Corollary~\ref{cor:cm_split}.  Hence only $p$ and $p^2$ remain.
\end{proof}
	
	\section{Concluding Remarks}\label{sec:conclusion}

The ridge-compatible ordering of maximal cliques is fundamental to determining the Betti formula for sequentially Cohen--Macaulay co-chordal graphs. Once that order is fixed, the projective dimension, depth, and Cohen--Macaulay Betti sequence follow directly from the clique sizes.  The split-graph formula is obtained independently from this construction.  For nilpotent graphs of products of chain rings, co-chordality together with sequential Cohen--Macaulayness is equivalent to thresholdness and to the presence of at most one factor of nilpotency index at least three; the resulting valuation layers give closed binomial expressions.

For $n=p^a$, the coefficient of the valuation-$t$ contribution is
\[
p^{a-t-1}(p-1)=\varphi(p^{a-t}),
\]
where $\phi$ is Euler's totient function, so Theorem~\ref{thm:scm_zdg_classification} may be written as
\[
\beta_i(S/I(\Gamma(\mathbb Z_{p^a})))
=
\sum_{t=\lceil a/2\rceil}^{a-1}
\varphi(p^{a-t})\binom{p^t-2}{i}
-
\binom{p^{\lfloor a/2\rfloor}-1}{i+1}.
\]

This shows that the homological invariants of zero-divisor graphs over $\mathbb{Z}_{p^a}$ are determined by Euler's totient function, connecting the algebraic topology of graph ideals with classical number theory.
The classification for $\mathbb Z_n$ in this paper is stated within the co-chordal range.  Determining whether additional sequentially Cohen--Macaulay zero-divisor graphs occur outside that range requires methods that do not rely on a linear resolution and remains separate from the present argument.

\section*{Data Availability}
No datasets were generated or analyzed in this study.
	
	\section*{Competing Interests}
The author declares no competing interests.
	
	\section*{Acknowledgments}
	The author is grateful to Ralf Fr\"oberg for carefully reading an earlier manuscript, for his helpful correspondence, and for his valuable suggestions to improve this paper.
	
{\raggedright
	\bibliographystyle{amsplain}
\bibliography{SCM_Updated}
}
	
\end{document}